\newtheorem{thm}{Theorem}[section]
\newtheorem{cor}[thm]{Corollary}
\newtheorem{lem}[thm]{Lemma}
\newtheorem{prop}[thm]{Proposition}
\newtheorem{ques}[thm]{Question}
\newtheorem*{clm*}{Claim}
\theoremstyle{definition}
\newtheorem{dfn}[thm]{Definition}
\theoremstyle{remark}
\newenvironment{lem*}[1]{\vspace{1ex}\noindent
{\bf Lemma* (#1).} [restatement]  \hspace{0.5em} \em }{ }
\newenvironment{thm*}[1]{\vspace{1ex}\noindent
{\bf Theorem* (#1).} [restatement]  \hspace{0.5em} \em }{ }
\newcommand{\set}[1]{\left\{#1\right\}}
\newcommand{\sr}[1]{\left(#1\right)}
\newcommand{\Integer}{\mathbb{Z}}
\newcommand{\Z}{\Integer}
\newcommand{\R}{\mathbb{R}}
\newcommand{\eps}{\varepsilon}
\newcommand{\eg}{{\em e.g.}~}
\renewcommand{\Pr}{}
\let\Pr\relax
\DeclareMathOperator{\Pr}{\mathbb{P}}
\def\squareforqed{\hbox{\rlap{$\sqcap$}$\sqcup$}}
\def\qed{\ifmmode\squareforqed\else{\unskip\nobreak\hfil
\penalty50\hskip1em\null\nobreak\hfil\squareforqed
\parfillskip=0pt\finalhyphendemerits=0\endgraf}\fi}
\newcommand{\ignore}[1]{ }
\newcommand{\dist}{\mathrm{dist}}
\newcommand{\conn}{\leftrightarrow}
\newcommand{\perm}{\mathcal{P}}
\newcommand{\mbf}[1]{\mathbf{#1}}
\renewcommand{\L}{\mathcal{L}}
\begin{document}

\title{Tricolor percolation and random paths in 3D}

\author{Scott Sheffield}
\address{Scott Sheffield \\ Massachusetts Institute of Technology, Cambridge, MA}
\email{sheffield@math.mit.edu} 

\author{Ariel Yadin}
\address{Ariel Yadin \\ Ben Gurion University of the Negev, Beer Sheva, Israel}
\email{yadina@bgu.ac.il}  

\thanks{{\bf Acknowledgements.} This research was supported by Grant No.\  2010357
from the United States-Israel Binational Science Foundation (BSF) and by NSF grants DMS 064558 and 1209044.  We would like to thank Vladas Sidoravicius for useful discussions,
and also the European Science Foundation Short Visit Grant within the framework of the ESF Activity entitled 'Random Geometry of Large Interacting Systems and Statistical Physics'.}

\maketitle

\begin{abstract}
We study ``tricolor percolation'' on the regular tessellation of $\R^3$ by truncated octahedra, which is the three-dimensional analog of the hexagonal tiling of the plane.  We independently assign one of three colors to each cell according to a probability vector $p = (p_1, p_2, p_3)$ and define a ``tricolor edge'' to be an edge incident to one cell of each color.  The tricolor edges form disjoint loops and/or infinite paths.  These loops and paths have been studied in the physics literature, but little has been proved mathematically.

We show that each $p$ belongs to either the {\em compact phase} (in which the length of the tricolor loop passing through a fixed edge is a.s.\ finite, with exponentially decaying law) or the {\em extended phase} (in which the probability that an $n \times n \times n$ box intersects a tricolor path of diameter at least $n$ exceeds a positive constant, independent of $n$).  We show that both phases are non-empty and the extended phase is a closed subset of the probability simplex.
%For any $p$, we show that the number of infinite paths a.s.\ belongs to $\{0,1,\infty\}$.

We also survey the physics literature and discuss open questions, including the following:  Does $p=(1/3,1/3,1/3)$ belong to the extended phase? Is there a.s.\ an infinite tricolor path for this $p$?  Are there infinitely many?  Do they scale to Brownian motion?  If $p$ lies on the boundary of the extended phase, do the long paths have a scaling limit analogous to SLE$_6$ in two dimensions?  What can be shown for the higher dimensional analogs of this problem?
\end{abstract}

\section{Introduction}
\subsection{Overview of the model} \label{subsec::overview}

Critical percolation on the faces of the hexagonal lattice has been very thoroughly studied, for example in celebrated works by Smirnov and by Smirnov and Werner
\cite{smirnov2001critical, smirnov2001critical2}.  As illustrated in Figure \ref{fig::hex}, if one colors each face one of two colors, the set of {\em bicolor edges} (i.e., edges that lie between two faces of distinct colors) forms a collection of paths and loops.  Smirnov's constructions can be used to show that as the mesh size tends to zero, the macroscopic loops converge in law to a random continuum collection of loops called a {\em conformal loop ensemble} \cite{camia2006two} (see also \cite{sheffield2009exploration, sun2011conformally}).  Each loop in the conformal loop ensemble looks locally like an instance of the {\em Schramm-Loewner evolution} with parameter $\kappa=6$ (written SLE$_6$) which is a particular random fractal non-self-crossing planar curve, first introduced by Schramm in 1999 \cite{schramm2000scaling}.  The existing theory of SLE curves relies heavily on conformal maps and the Riemann mapping theorem, and is very specific to two dimensions.

\begin{figure}[htbp]
\begin{center}
\includegraphics[width=2in]{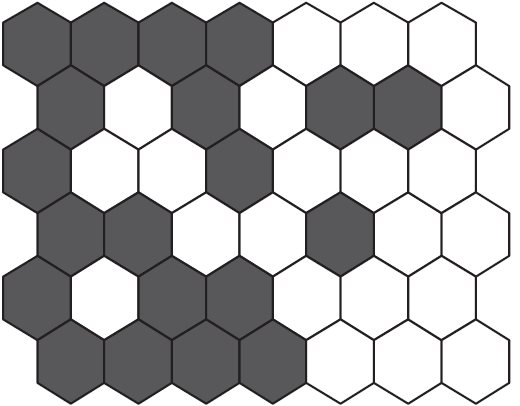}
\caption{ \label{fig::hex} The faces of the hexagonal lattice are each colored one of two colors; \textbf{\textbf{}}a {\em bicolor edge} is defined to be an edge incident to one face of each color.  The union of these edges is a collection of disjoint finite loops and/or infinite paths.}
\end{center}
\end{figure}

This paper will treat a three-dimensional analog of the percolation model mentioned above, in which the hexagon is replaced by the {\em truncated octahedron}, illustrated in Figure \ref{fig::onecell}.
Just as the hexagon tiles the plane, the truncated octahedron tessellates three-dimensional space, as illustrated in Figures \ref{fig::redyellowgrid} and \ref{fig::redyellowbluegreengrid}.\footnote{ This tessellation and the planar hexagonal tiling are the $d=2$ and $d=3$ cases of a more general tessellation of $\R^d$ by {\em permutohedra} (also spelled ``permutahedra'').  See \cite{ziegler1995lecture}, \cite{sloane1999sphere}, Wikipedia or appendix for more information.}  The simplest way to describe this tessellation is that it is the Voronoi tessellation corresponding to the set $$\mathcal L := 2 \mathbb Z^3 \cup \bigl(2 \mathbb Z^3 + (1,1,1) \bigr),$$ i.e., the set of vertices of $\mathbb Z^3$ whose coordinates are all even or all odd.  In particular, the cells in the tessellation are indexed by $\mathcal L$, and each $v \in \mathcal L$ is the center of the corresponding cell.  We will sometimes abuse notation by using $v$ to denote the cell itself.  Cells $v,w \in \mathcal L$ are adjacent along a square face if $v-w \in \{(\pm 2, 0, 0), (0, \pm 2, 0), (0, 0, \pm 2) \}$, and adjacent along a hexagonal face if $v-w \in \{(\pm 1, \pm 1, \pm 1) \}$, and otherwise non-adjacent.  The lattice $\mathcal L$ endowed with this adjacency relation is called the {\em body centered cubic lattice}.

\begin{figure}[htbp]
\begin{center}
\includegraphics[width=1in]{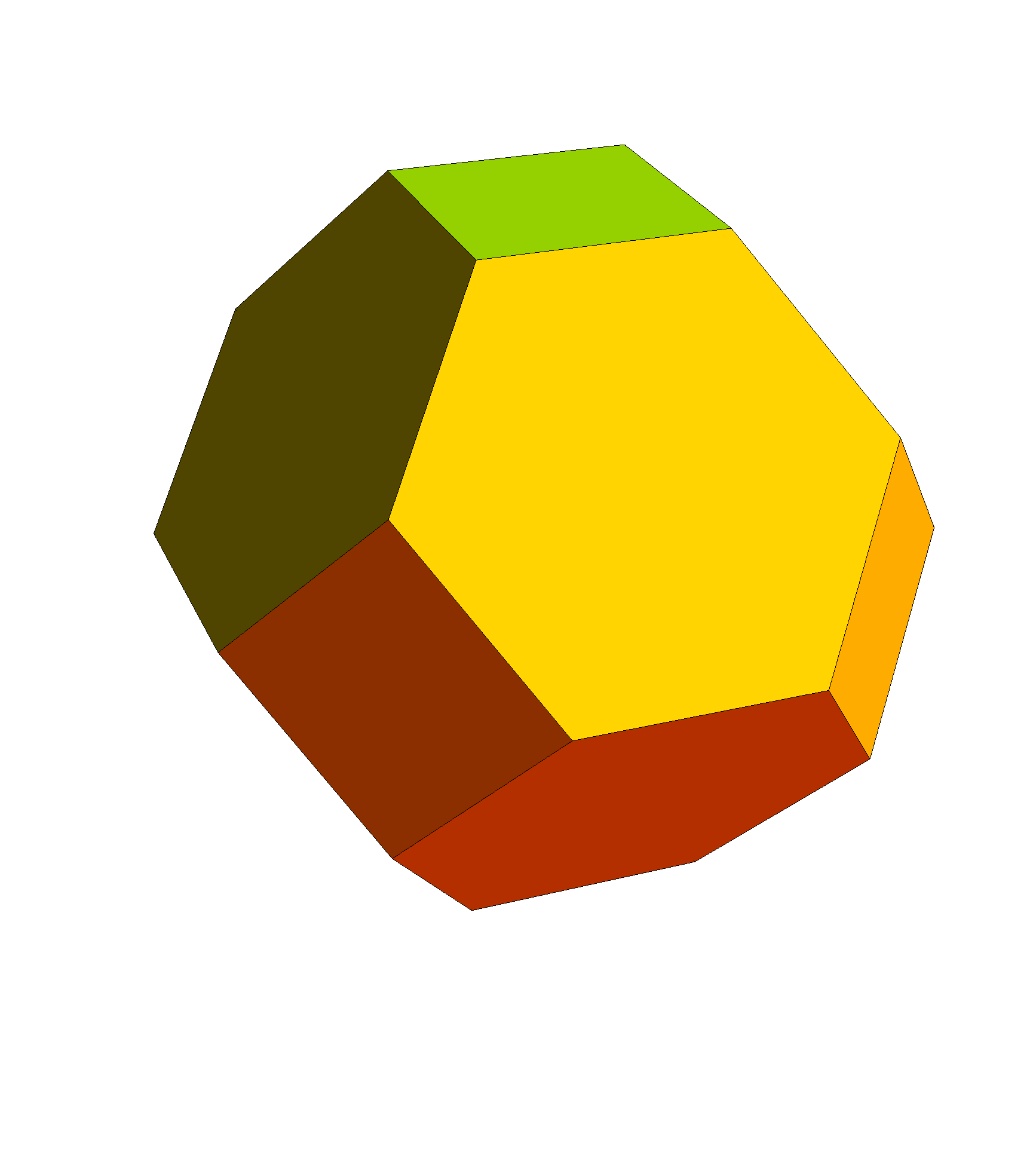}
\vspace{-.3in}\caption{ \label{fig::onecell} The truncated octahedron has six square and eight hexagonal faces.  It can be placed in $\mathbb R^3$ in such a way that the center is a point in $\mathcal L$ and the centers of the six square faces are at the neighboring vertices $\{ v \pm (1,0,0), v \pm (0,1,0), v  \pm (0,0,1) \}$, which belong to $\mathbb Z^3 \setminus \mathcal L$.}
\end{center}
\end{figure}

\begin{figure}[htbp]
\begin{center}
\includegraphics[width=2in]{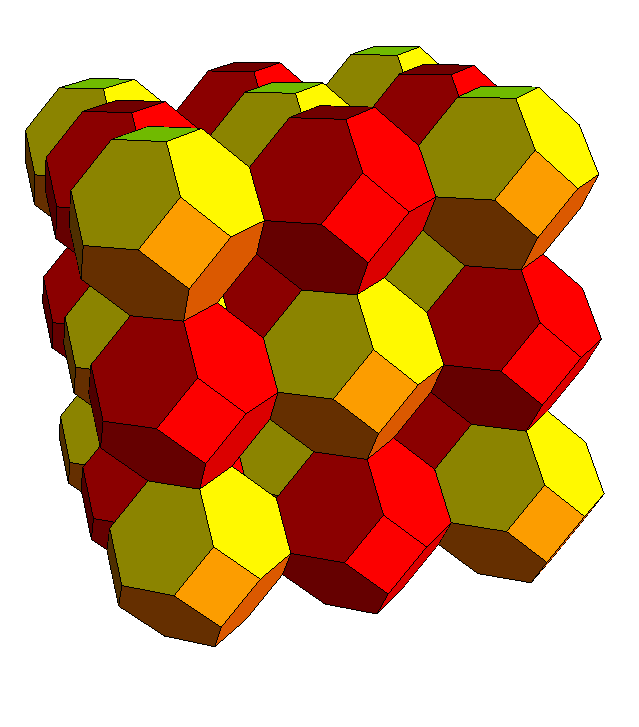}\vspace{-.2in} \caption{\label{fig::redyellowgrid} Shown are cells, as described in the caption of Figure \ref{fig::onecell}, centered at vertices $v \in 2\mathbb Z^3$.  They are colored red or yellow depending on whether the sum of the coordinates of $v$, modulo $4$, is equal to $0$ or $2$.}
\end{center}
\end{figure}

\begin{figure}[htbp]
\begin{center}
\includegraphics[width=2in]{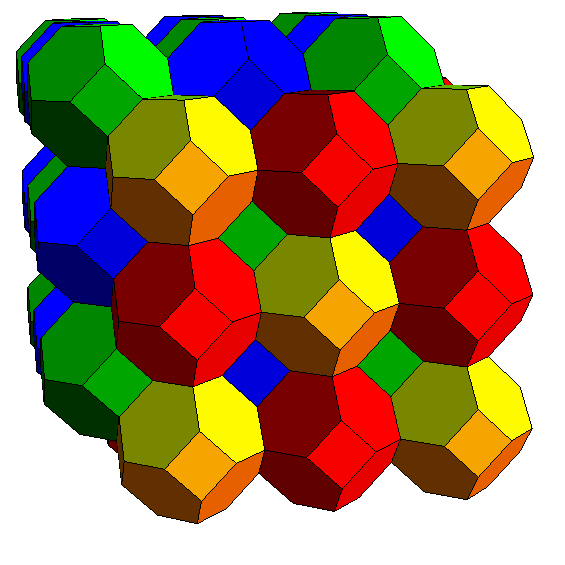}\vspace{-.2in}
\caption{\label{fig::redyellowbluegreengrid}Shown are the cells from Figure \ref{fig::redyellowgrid} together with cells centered at points in $v \in 2 \mathbb Z^3 + (1,1,1)$.  The latter are colored green or blue, depending on whether the sum of the coordinates of $v$, modulo $4$, is equal to $1$ or $3$.  This pattern extends to the full tessellation of $\mathbb R^3$.}
\end{center}
\end{figure}

The tessellation determines a complex of cells, faces, edges, and vertices.  We say that two of these objects are {\em incident} to each other if one is entirely contained in the boundary of the other.  The reader may observe by studying the figures that every face is incident to two cells, every edge is incident to three cells, and every vertex is incident to four cells.  (This is analogous to the planar hexagonal tiling in which each edge is incident to two hexagons, and each vertex is incident to three hexagons.)

There is a one-to-one correspondence between faces and adjacent pairs of cells in $\mathcal L$.  For example, $\bigl\{ (3,3,3), (3,3,5) \bigr\}$ describes a square face and $\bigl\{ (3,3,3), (4,4,4) \bigr\}$ a hexagonal face.  Similarly, there is a one-to-one correspondence between edges and triples of mutually adjacent cells ($3$-cliques). For example, $\bigl\{ (3,3,3), (3,3,5), (4,4,4) \bigr\}$ describes an edge (the edge incident to all three of those cells).  Finally, there is a one-to-one correspondence between vertices and quadruples of mutually adjacent cells ($4$-cliques).  For example, $\bigl\{ (3,3,3), (3,3,5), (4,4,4), (2,4,4) \bigr\}$ describes a vertex (the vertex incident to all four cells). When cells, faces, edges, and vertices are respectively represented by sets of $1$, $2$, $3$, and $4$ mutually adjacent elements of $\mathcal L$, the incidence relation corresponds to the subset-or-superset relation.

In this paper, we fix a vector $p = (p_1, p_2, p_3)$ belonging to the simplex
 $$\mathcal T:= \{ p \in [0,1]^3 : p_1+p_2+p_3 = 1\}$$ and then independently assign one of the three colors (red, yellow, and blue --- $i$th color chosen with probability $p_i$) to each of the cells in $\mathcal L$.  Let $\sigma: \mathcal L \to \{\mathrm{red} ,\mathrm{yellow},\mathrm{blue} \}$ denote the random color assignment.  An instance of such a coloring with $p=(1/3,1/3,/1,3)$ is illustrated in Figure \ref{fig::randomtricoloring}.  We then define a {\em tricolor edge} to be an edge that is incident to cells of all three colors, as illustrated in Figure \ref{fig::tricoloredge}.  Such an edge is represented by a triple of mutually adjacent cells in $\mathcal L$, each assigned a different color by $\sigma$.
A {\em tricolor vertex} is a vertex incident to cells of all three colors --- and represented by four cells in $\mathcal L$: two of one color, and one of each other color.  Since every tricolor vertex is incident to exactly two tricolor edges, and every tricolor edge is incident to exactly two tricolor vertices, the tricolor edges and vertices form loops and/or infinite paths (as the bicolor edges do in the planar hexagonal tiling).  We are interested in studying the existence and behavior of long tricolor paths.

A recent survey of this and similar models was given by Nahum and Chalker \cite{nahum2012universal}.  (See also \cite{nahum20113d}.)  According to \cite{nahum2012universal} the tricolor percolation model was first introduced by Scherrer and Frieman \cite{scherrer1986cosmic} as an enhancement of work of Vachaspati and Vilenkin \cite{vachaspati1984formation}.  It was studied via Monte Carlo simulations in a series of papers by Bradley, Debierre, and Strenski in 1992 \cite{bradley1992anomalous, bradley1992growing, bradley1992novel}, and has since been used, e.g., in \cite{hindmarsh1995statistical}.    The Nahum and Chalker paper also considers what should be involved in a continuum field theory associated to this model (keywords include $CP^{k|k}$, supersymmetry, replica limit), but it is not clear how to translate these ideas into mathematical conjectures.

\begin{figure}[htbp]
\begin{center}
\includegraphics[width=3in]{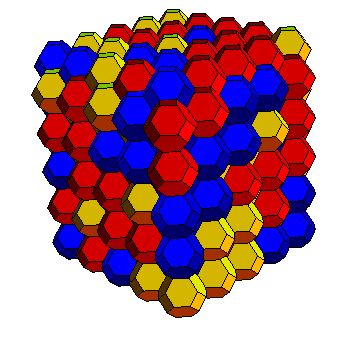}
\caption{\label{fig::randomtricoloring}Random $p=(1/3,1/3,1/3)$ tricoloring.}
\end{center}
\end{figure}

\begin{figure}[htbp]
\begin{center}
\includegraphics[width=2in]{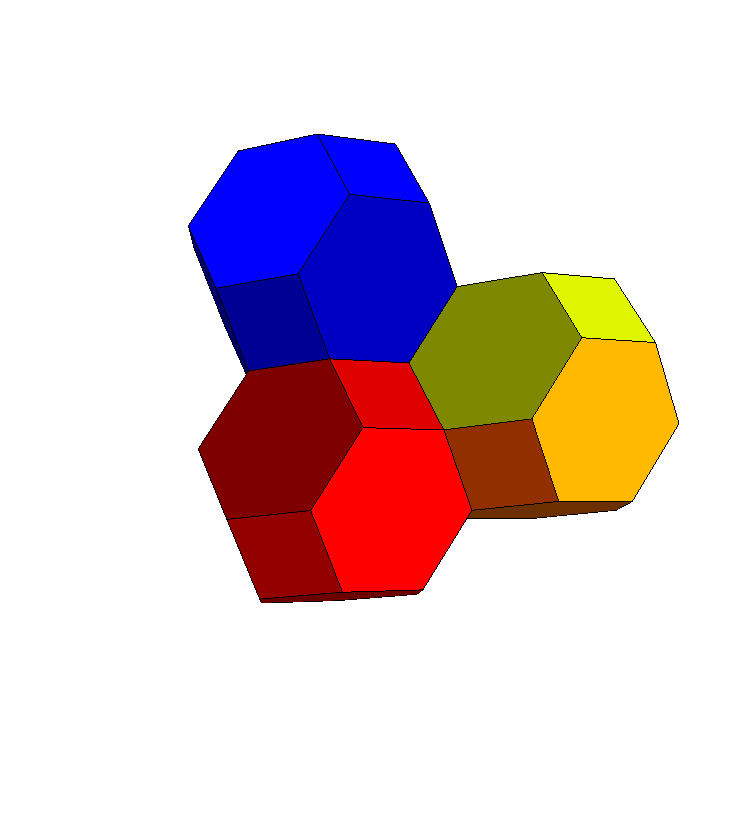} \includegraphics[width=2in]{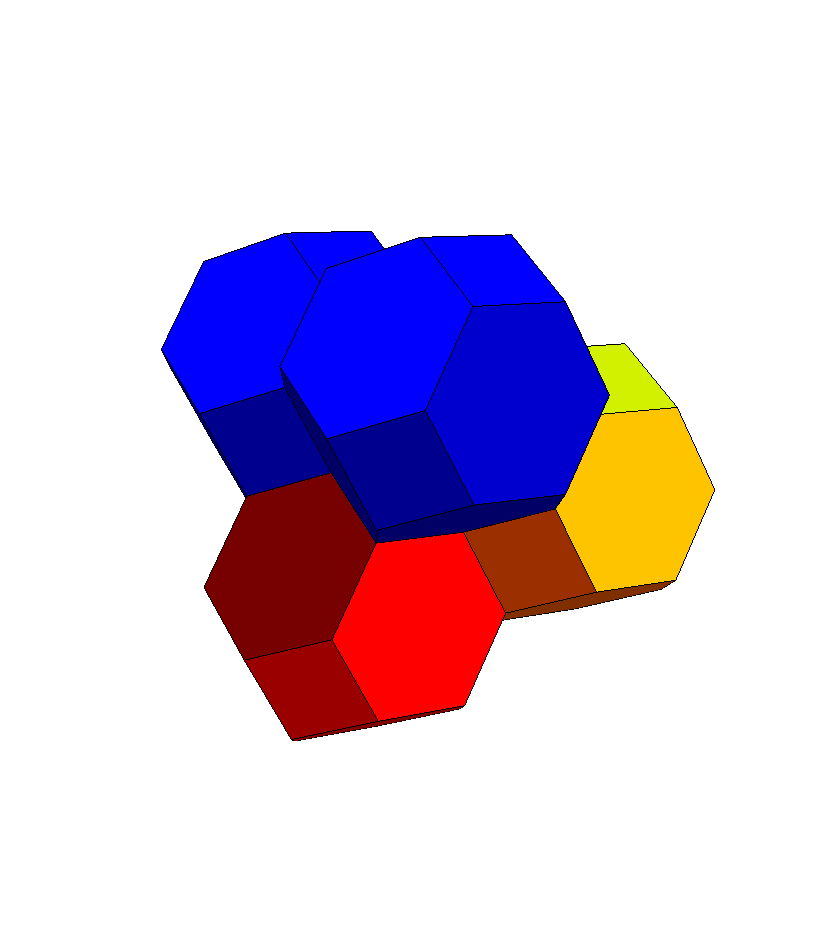}
\caption{\label{fig::tricoloredge} The left figure shows the three cells incident to a tricolor edge $e_1$.  The right figure shows these cells together with one additional blue cell, which is incident to one of the endpoints of $e$.  This blue cell, together with the original red and yellow cells, is incident to a second tricolor edge $e_2$, which has a (tricolor) endpoint vertex in common with $e_1$.  The set of all tricolor edges forms a collection of loops and/or infinite paths (like bicolor edges in two-color colorings of the hexagonal lattice).}
\end{center}
\end{figure}

The various simulations surveyed in \cite{nahum2012universal} suggest a phase transition: for certain values of $p$, including $(1/3,1/3,1/3)$, the origin cell has a positive probability of being incident to an edge of an infinite (as far as the simulation can detect) tricolor path, and this path appears to have (like Brownian motion) scaling dimension $2$.  These $p$ values are sometimes said to belong to the {\em extended phase} (though we will give a slightly different definition of the term ``extended phase'' below).  On the other hand, it is not hard to see that for some values of $p$, the length of a tricolor path starting at the origin will be a.s.\ finite, with a law that decays exponentially.  These $p$ are said to belong to the {\em compact phase}.  This is in particular the case if one of the $p_i$ lies below the threshold for site percolation on the body centered cubic lattice.  To see this, suppose $p_1$ is subcritical and note that the length of a tricolor path including an edge on the origin cell is bounded above by the number of edges incident to the largest cluster of red cells containing the origin (or an origin-adjacent cell); it is well known that the latter number has exponentially decaying law in the subcritical phase (see, {\em e.g.}, the reference text \cite{MR1707339}).  The critical probability for site percolation on the body centered cubic lattice has been estimated by Monte Carlo approximations as $p_c(\L) \approx .246$ \cite{1983JPhA...16..783G, 1991PhRvB..44...76B, 1998JPhA...31.8147L}, with the most recent estimate claiming significance to several decimal places: $p_c(\L) \approx .2459615(10)$ \cite{1998JPhA...31.8147L}.  The fact that $p_c(\L) < 1/3$ also follows rigorously from our arguments, see Corollary \ref{cor::explicitcriticalpercolationbound}.  The extended phase must be a proper subset of the triangle $\{p \in \mathcal T: p_1 \geq p_c(\L), p_2 \geq p_c(\L), p_3 \geq p_c(\L) \}$.

Although this has not been proved mathematically, it seems natural to guess that the extended phase is a convex subset of this triangle, centered at $(1/3,1/3,1/3)$, whose boundary is a simple curve (something like the brown region shown in Figure \ref{fig::phasediagram}) and that this curve divides the compact and extended phases from each other.   Numerical explorations have attempted to identify a $p$ on the boundary between these phases, and have found that for such a $p$, the long path seems to have a scaling dimension of about $5/2$ \cite{bradley1992anomalous, bradley1992growing,  bradley1992novel}.

\begin{figure}[htbp]
\begin{center}
\subfigure[$100$ steps]{\includegraphics[width=0.45\textwidth]{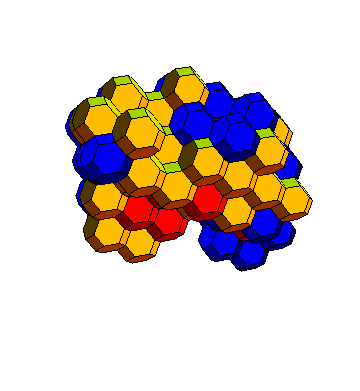}}
\hspace{0.02\textwidth}
\subfigure[$1000$ steps]{\includegraphics[width=0.45\textwidth]{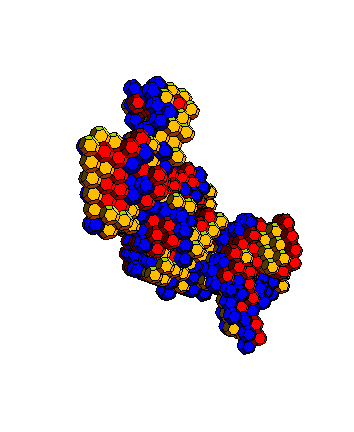}}
\subfigure[$10000$ steps]{\includegraphics[width=0.45\textwidth]{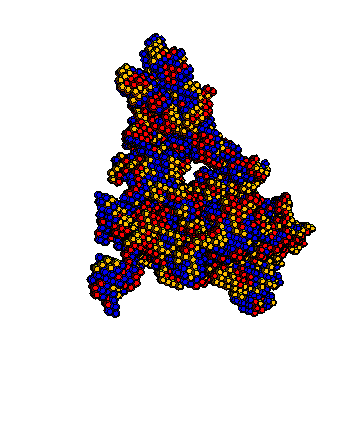}}
\hspace{0.02\textwidth}
\subfigure[\label{fig::chainssub}$100000$ steps]{\includegraphics[width=0.45\textwidth]{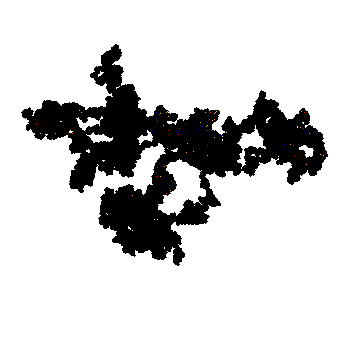}}
\end{center}
\caption{\label{fig::chains} \footnotesize{Random tricolor chains of length 100, 1000, 10000, and 100000, found in the $p=(1/3,1/3,1/3)$ model, that have not (yet) formed loops.}}
\end{figure}

To conclude our overview of the model, and to encourage further participation in this subject, we remark that the figures in this paper are remarkably easy to produce using math packages with built-in polyhedron functionality.  Such packages allow users to rotate the figures with a mouse and view them from different angles.  In Mathematica 8.0, for example, the following code defines a function ``PlaceCell'' (which puts a cell of color $D$ at a location $(A,B,C)\in \mathcal L$) and then uses it to generate the grid of cells shown in Figure \ref{fig::redyellowgrid}.

\begin{framed}
{\bf \small
\begin{verbatim}PlaceCell[{{A_, B_, C_}, D_}] =
  Translate[Scale[Rotate[{Switch[D, 1, Red, 2, Yellow, 3, Blue, 4, Green],
PolyhedronData["TruncatedOctahedron", "Faces"]},
  45 Degree, {0, 0, 1}], 1/(Sqrt[2])], {A, B, C}];
n = 3; Graphics3D[ Table[PlaceCell[{{2 i, 2 j, 2 k}, 1 + Mod[i + j + k, 2]}],
{i, 1, n}, {j, 1, n}, {k, 1, n}], Boxed -> False, Background -> White]
\end{verbatim}
 } \vspace{-.1in}
\end{framed}

Similarly, the following code uses the ``PlaceCell'' function to generate the chains in Figure \ref{fig::chains}.  It first randomly colors vertices of a (sufficiently large) $n \times n \times n$ subset of $\mathbb Z^3$ (which includes $\mathcal L$), storing them in the array ``ColorGrid''.  A quadruple of cells (``Vert'' and ``InitVert'') is used to describe a tricolor vertex, but we also use the ordering of the cells in this quadruple to encode a directed tricolor edge terminating at that vertex: the first three cells are the red, blue, and yellow cells incident to the edge, and last is the cell the edge points to.  The code includes a little piece of logic for replacing such a directed tricolor edge with the next directed tricolor edge along the tricolor path, and this logic is iterated a number of times (denoted ``Steps'') or until a loop is formed.  The code could presumably be made asymptotically more efficient by only assigning colors dynamically to the cells hit by the path (perhaps storing these cells and their colors in an efficient lookup table), instead of assigning them to an entire $n \times n \times n$ grid.

\begin{framed}
{\bf \small
\begin{verbatim}
n = 100; Steps = 500; Vert =
 InitVert = {{n/2, n/2, n/2 }, {n/2 + 1, n/2 + 1, n/2 + 1},  {n/2 - 1,
     n/2 + 1, n/2 + 1},  {n/2, n/2, n/2 + 2}}; ColorGrid =
 Table[Random[Integer, {1, 3}], {i, 1, n}, {j, 1, n}, {k, 1, n}];
ColorGrid[[n/2, n/2, n/2]] = 1; ColorGrid[[n/2 + 1, n/2 + 1, n/2 + 1]] = 2;
ColorGrid[[n/2 - 1, n/2 + 1, n/2 + 1]] = 3;
NewEnd[{A_, B_, C_, D_}]=D - 2 Sign[Mod[3 D - A - B - C, 3] (3 D - A - B - C)];
Chain = Table[{Vert[[ Mod[i - 1, 4] + 1]] , i}, {i, 1, Steps}];  j = 4;
loopmade = 0; While[j <= Steps && loopmade == 0, oldtarget = Vert[[4]];
Vert[[4]] =  Vert[[oldcolor =
    ColorGrid[[oldtarget[[1]], oldtarget[[2]], oldtarget[[3]]]]]];
Vert[[oldcolor]] = oldtarget; Vert[[4]] = NewEnd[Vert];
Chain[[j]] = {oldtarget, oldcolor}; If[Vert == InitVert, loopmade = 1]; ++j];
If[loopmade == 1, Print["Loop!"]];  Graphics3D[
 Table[PlaceCell[Chain[[i]]], {i, 1, j - 1}], Boxed -> False]
\end{verbatim}
 } \vspace{-.1in}
\end{framed}

\begin{figure}[htbp]
\begin{center}
\includegraphics[width=1.5in]{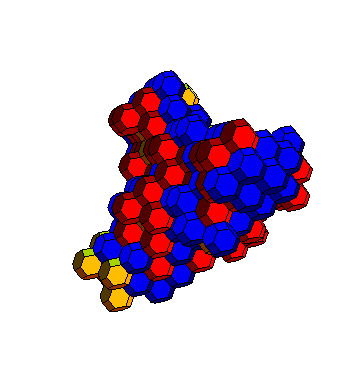}
\vspace{-.4in}
\caption{Cells incident to finite tricolor loop found in $p=(1/3,1/3,1/3)$ model.}
\end{center}
\end{figure}

\subsection{New results} \label{subsec::newresults}

Write $A_n$ for the {\em cubic $n$-annulus}, which we define to be the subset of $\mathcal L$ formed by starting with a radius $3n$ cube and removing a radius $n$ cube from its center:
$$A_n := ([-3n,3n]^3 \setminus [-n,n]^3) \cap \mathcal L.$$
We now present our first formal definition of the compact and extended phases:

\begin{dfn} \label{def::extendeddef}
Let $E_n$ be the event that there is a tricolor path from a vertex on the interior boundary of $A_n$ to a vertex on the exterior boundary of $A_n$.  A probability vector $p \in \mathcal T$ lies in the {\em compact phase} if $\Pr_p(E_n)$ tends to zero exponentially fast as a function of $n$.  It lies in the {\em extended phase} if $\Pr_p(E_n)$ is bounded below by a positive constant independently of $n$.
\end{dfn}

Here and in the sequel
$\Pr_p$ denotes the probability measure on colorings of $\L$ in which colors are assigned independently according to the probability vector $p \in \mathcal T$.  Note that one could alternatively use the term ``compact phase'' to mean the set of $p$ for which there a.s.\ exists no infinite tricolor path, or (another alternative) the set of $p$ for which the expected length of the tricolor path through the origin is finite.  As we discuss in Section \ref{subsec::open}, it is reasonable to conjecture that these definitions describe the same set, except possibly along a critical phase separation curve.  This is analogous to the situation in classical percolation theory, where one can define the critical percolation threshold in various ways, and it takes some work to prove that different definitions are equivalent.  We will use Definition \ref{def::extendeddef} in this paper in part because it sets up a natural dichotomy that is relevant to the statements we are able to prove.  To begin with, we are able to prove that each $p$ belongs to exactly one of these two phases:

\begin{figure}[htbp]
\begin{center}
\includegraphics[width=4in]{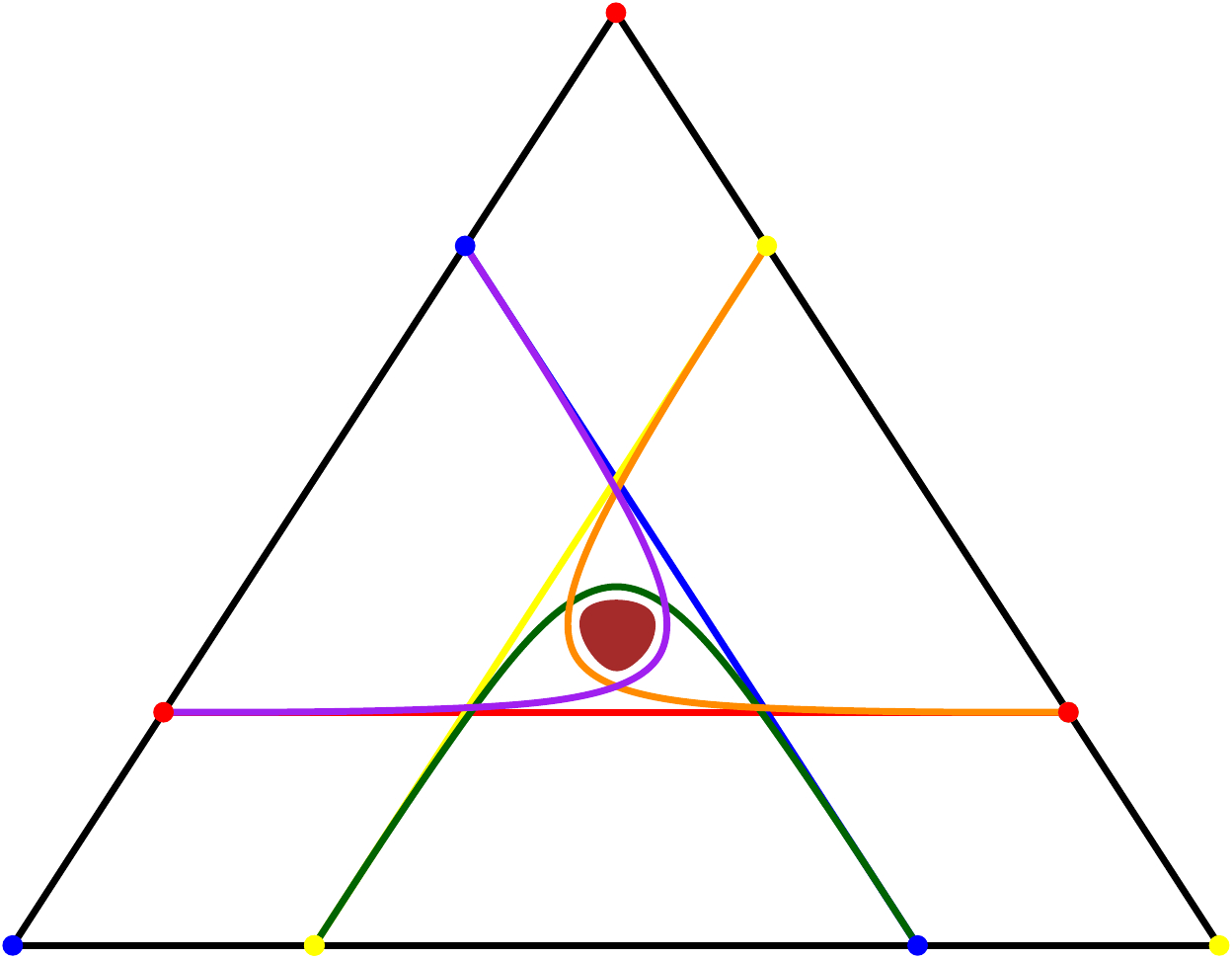}
\caption{\label{fig::phasediagram} This diagram represents the triangle of possible choices for $p=(p_1, p_2, p_3)$, with the three vertices (colored red/yellow/blue) representing
respectively the all red/yellow/blue extreme points: $(1,0,0)$, $(0,1,0)$ and $(0,0,1)$.  The red/yellow/blue lines represent, respectively, the thresholds for the existence of infinite red/yellow/blue clusters of cells (i.e., the lines $p_i = p_c(\L)$ where numerics suggest $p_c(\L) \approx .2459615(10)$ \cite{1998JPhA...31.8147L}).  The orange/purple/green lines represent the threshold for existence of ``large'' clusters of the faces that lie between red-yellow/red-blue/blue-yellow cell pairs.  (More precisely, each curve, together with the line segment between its endpoints, bounds the region in which the probability that the corresponding face cluster containing a given face has size greater than $N$ does {\em not} decay exponentially with $N$; we expect but do not prove that this cluster has a positive probability of being infinite when $p$ is in the interior of this region.)  The middle brown region represents the {\em extended phase} which is (roughly speaking) the region in which there are macroscopic tricolor paths at all scales.  We expect (but do not prove) that when $p$ lies in the interior of this region there are a.s.\ infinite tricolor paths.  Simulations in \cite{bradley1992anomalous, bradley1992growing,  bradley1992novel} suggest a larger extended phase than the one sketched here: they find that the boundary of the extended phase intersects the line segment $p_2 = p_3$ (the vertical bisector of the probability triangle in the figure) at $p_1 \approx .255$ (just above the $p_1 \approx .246$ threshold for red percolation) and at $p_1 \approx .417$.
}
\end{center}
\end{figure}

\begin{thm} \label{thm::annulusphasecharacterization}
There exists a constant $\alpha \in (0,1)$ such that if $\Pr_p(E_n) < \alpha$, for some $n>1$, then $\Pr_p(E_n)$ tends to zero exponentially fast as a function of $n$.  Hence every $p \in \mathcal T$ belongs to either the compact phase or the extended phase.
\end{thm}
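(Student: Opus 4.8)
The plan is to establish a finite-size criterion by a block-renormalization (Peierls-type) argument: as soon as the crossing probability drops below a small \emph{universal} threshold $\alpha$ at one scale $n$, that scale becomes the mesh of a coarse lattice whose ``bad'' sites are rare, finitely dependent, and subcritical, so that every long crossing is forced through a macroscopic connected cluster of bad sites. First I would fix the scale $n$ supplied by the hypothesis and tile $\R^3$ by the cubes $B_z := 2nz + [-n,n]^3$, $z \in \Z^3$; since each center $2nz$ lies in $2\Z^3$, translation by it preserves both $\L$ and the law $\Pr_p$. Call $z$ \emph{bad} if there is a tricolor path joining $\partial(2nz + [-n,n]^3)$ to $\partial(2nz + [-3n,3n]^3)$, i.e.\ a translate of the crossing witnessing $E_n$. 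Two observations are then immediate: translation invariance gives $\Pr_p(z \text{ bad}) = \Pr_p(E_n)$ for every $z$, and the badness of $z$ is measurable with respect to the colors of cells in $2nz + [-3n,3n]^3$, so blocks at $\ell^\infty$-distance exceeding a fixed geometric constant $R$ have independent states.

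The geometric core is a forcing statement. Suppose $E_m$ holds with $m > 3n$, and let $\gamma$ be a tricolor path from the interior to the exterior boundary of $A_m$; its endpoints $x,y$ satisfy $\|x\|_\infty = m$ and $\|y\|_\infty = 3m$, so by the reverse triangle inequality $\|x-y\|_\infty \ge 2m$. For any block $B_z$ that $\gamma$ enters, $\|x - 2nz\|_\infty + \|y - 2nz\|_\infty \ge 2m$ forces at least one endpoint to lie at $\ell^\infty$-distance $> 3n$ from $2nz$, so $\gamma$ must leave $2nz + [-3n,3n]^3$; the intervening sub-arc of $\gamma$ is itself a tricolor path (being a tricolor edge is a purely local property of three incident cells, hence is inherited by sub-arcs) and crosses the translated annulus, so $z$ is bad. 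Therefore the blocks met by $\gamma$ constitute a connected, entirely bad crossing of the coarse annulus of cardinality at least $c\,m/n$, and $E_m$ is contained in the event that such a bad crossing exists.

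It then remains to run the subcritical estimate. From any set of $k$ bad blocks a greedy procedure extracts at least $k/C$ of them that are pairwise at $\ell^\infty$-distance $>R$ (with $C := (2R+1)^3$), whose states are independent; hence any prescribed set of $k$ blocks is entirely bad with probability at most $\Pr_p(E_n)^{k/C} < \alpha^{k/C}$. Combining this with the bound $\mu^k$ on the number of connected crossing clusters of size $k$ anchored at the coarse annulus — $\mu$ a purely geometric connectivity constant — and summing the union bound over $k \ge c\,m/n$ yields $\Pr_p(E_m) \le C'\,(\mu\,\alpha^{1/C})^{c\,m/n}$. Fixing the universal constant $\alpha$ small enough that $\mu\,\alpha^{1/C} < 1$ (a choice depending only on the lattice, not on $p$) gives exponential decay of $\Pr_p(E_m)$ in $m$. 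The dichotomy follows at once: if $\Pr_p(E_n) < \alpha$ for some $n>1$ then $p$ lies in the compact phase, and otherwise $\inf_n \Pr_p(E_n) \ge \alpha > 0$ places $p$ in the extended phase.

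I expect the main obstacle to be the forcing lemma together with the passage from a small \emph{marginal} bad-probability to honest subcriticality with a threshold $\alpha$ that is \emph{uniform in $p$}: one must choose the block size, the annulus aspect ratio, and the dependence range $R$ so that restricted sub-arcs genuinely realize the local crossing event while non-adjacent blocks remain exactly independent, and so that the combinatorial constants $\mu$ and $C$ governing the Peierls sum stay geometric. Once these are in place, the counting is routine.
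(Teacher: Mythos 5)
Your proposal is correct and follows essentially the same route as the paper: the paper also coarse-grains $\R^3$ into cubes at the hypothesis scale, declares a block open/bad when the translated annulus-crossing event occurs there, exploits the finite ($L^\infty$-range $2$) dependence of tricolor edges, and runs exactly this Peierls extraction-plus-counting estimate (packaged there as Proposition \ref{prop:Peierls} for finitely dependent site percolation, applied via Lemma \ref{lem:compact phase}). The only difference is organizational --- the paper separates the abstract subcritical bound from the coarse-graining/forcing step, and states the lemma for spatially varying color probabilities so it can be reused in the proof of Theorem \ref{thm::closedandnonempty} --- but the underlying argument is the one you give.
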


As we will see, this can be proved with a type of finite-range-dependent-percolation Peierls argument.  In fact, this argument also implies the following:
\begin{thm} \label{thm::exponentialdecayofpathlength}
If $p$ belongs to the compact phase, then the probability that an edge at the origin belongs to a tricolor path of length $L$ decays exponentially fast
as a function of $L$.
\end{thm}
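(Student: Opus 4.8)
The plan is to combine the renormalization (``bad block'') picture underlying Theorem~\ref{thm::annulusphasecharacterization} with an elementary volume estimate for self-avoiding paths. I will use throughout that tricolor edges meet at tricolor vertices of degree exactly two, so a tricolor \emph{path} is self-avoiding: a path of $L$ edges uses $L+1$ distinct vertices, and since a cube of side $s$ contains only $O(s^3)$ vertices of the complex, such a path cannot fit inside a cube of side $\le c_0^{-1}L^{1/3}$. Hence a tricolor path of length $L$ has diameter at least $c_0 L^{1/3}$ for a fixed $c_0>0$.

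First I would fix the renormalization scale. Since $p$ lies in the compact phase, $\Pr_p(E_n)\to 0$, so I may choose $N$ with $\Pr_p(E_N)<\eps$ for a small constant $\eps$ to be specified. Tile $\R^3$ by cubes $B_z$ of side $2N$ indexed by a coarse lattice, and around each $B_z$ consider the translated annulus $A_N$, whose inner cube is exactly $B_z$ and whose outer cube $O_z$ has side $6N$. Call $z$ \emph{bad} if a tricolor path crosses this translated $A_N$; by translation invariance $\Pr_p(z\text{ bad})=\Pr_p(E_N)<\eps$. The bad event at $z$ is measurable with respect to the colors in $O_z$, so the bad field is $k$-dependent for a fixed $k$ (coarse sites with disjoint outer cubes are independent). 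This is precisely the subcritical dependent-percolation field produced in the proof of Theorem~\ref{thm::annulusphasecharacterization}: by the Liggett--Schonmann--Stacey domination theorem it is stochastically dominated by Bernoulli site percolation of parameter $q(\eps)\to 0$, so taking $\eps$ small ensures $\lambda q(\eps)<1$, where $\lambda$ bounds the number of $*$-connected lattice animals of a given size through a fixed site.

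The geometric heart of the argument is the observation that upgrades the naive diameter bound $e^{-cL^{1/3}}$ to genuine exponential decay in $L$. Suppose the origin edge lies on a tricolor path $\gamma$ of length $\ge L$ and that $\mathrm{diam}(\gamma)>6N\sqrt3=\mathrm{diam}(O_z)$; by the volume bound this holds automatically once $L>L_0$ for some $L_0=O(N^3)$. Then for \emph{every} coarse box $B_z$ that $\gamma$ meets, $\gamma$ also has a vertex outside $O_z$, so a sub-path of $\gamma$ joins a vertex of $B_z$ to a vertex outside $O_z$ and thus contains a crossing of the translated $A_N$; hence every visited box is bad. Because $\gamma$ is connected with steps of bounded length, the set of visited boxes is $*$-connected in the coarse lattice and contains the coarse origin; and because $\gamma$ is self-avoiding with $\ge L$ vertices while each box holds only $O(N^3)$ of them, this $*$-connected bad cluster has at least $m:=cL/N^3$ sites.

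Finally I would sum over animals: a fixed $*$-connected animal of size $m$ is entirely bad with probability at most $q(\eps)^m$, and there are at most $\lambda^m$ such animals through the origin, so $\Pr_p(\exists\text{ bad cluster of size}\ge m \ni 0)\le \sum_{j\ge m}(\lambda q)^j\le C(\lambda q)^m=Ce^{-c'L}$ since $m\asymp L/N^3$. The range $L\le L_0$ is trivial after enlarging the prefactor so that the bound exceeds $1$ there, giving $\Pr_p(\text{origin edge on a tricolor path of length}\ge L)\le Ce^{-c'L}$. The main obstacle is exactly the geometric claim that a macroscopic path makes \emph{every} box it visits bad: without it a space-filling path would only force diameter $\sim L^{1/3}$ and yield a weaker stretched-exponential bound, and it is the degree-two (self-avoidance) property of tricolor paths together with the choice of annulus scale well below the path diameter that make the claim hold.
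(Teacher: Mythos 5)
Your proposal is correct and is essentially the paper's own argument: renormalize to a coarse lattice of cubes at a scale where the annulus-crossing probability is small, observe that a tricolor path of length $L$ through the origin forces a connected set of $\gtrsim L/N^3$ coarse sites whose translated annuli are all crossed, and finish with an exponential bound for this finite-range-dependent site percolation. The only cosmetic difference is the final step, where you invoke Liggett--Schonmann--Stacey domination plus lattice-animal counting, while the paper's Proposition~\ref{prop:Peierls} reaches the same bound directly by extracting a well-separated (hence independent) subset of the bad cluster and then counting animals.
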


This theorem suggests a stark dichotomy between the two phases.  In the compact phase, the tricolor loops are ``microscopic'', much like the clusters in subcritical percolation.  The length of a tricolor loop starting at the origin not only is a.s.\ finite, but also has exponentially decaying law.  In the extended phase, one has a positive lower bound on the probability of seeing a tricolor loop crossing $A_n$, independently of $n$.  Informally, this implies that one encounters macroscopic loops ``at any scale''.  Using what is probably the trickiest and least direct argument in the paper, we establish the following:

\begin{thm} \label{thm::closedandnonempty} The extended phased is a closed and non-empty subset of $\mathcal T$.\end{thm}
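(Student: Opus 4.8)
The plan is to prove the two assertions --- non-emptiness and closedness --- separately, since they require quite different ideas.

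\textbf{Non-emptiness.}  First I would show the extended phase contains at least one point.  The natural candidate is $p=(1/3,1/3,1/3)$, but proving that seems hard (indeed the paper lists it as an open question), so instead I would aim only to produce \emph{some} $p$ in the extended phase.  The cleanest route is a duality/counting argument at the level of the annulus event $E_n$ from Definition \ref{def::extendeddef}.  The key structural fact is that tricolor edges and vertices form disjoint loops and infinite paths, so a tricolor path crossing the annulus $A_n$ is a topological obstruction: its presence or absence is tied to the existence of certain monochromatic surfaces or circuits separating the inner and outer boundaries.  I would try to set up a self-duality-type relation showing that for $p=(1/3,1/3,1/3)$ (or a symmetric perturbation), either a tricolor crossing exists or one of three complementary blocking configurations exists, and by the color symmetry each has comparable probability.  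This should force $\Pr_p(E_n)$ to stay bounded below by a positive constant.  The main obstacle here is that three-dimensional topology has no clean analog of planar crossing/dual-crossing dichotomy: a path crossing a $3$D annulus is not simply blocked by a transverse path, but by a separating \emph{surface}, so the combinatorial bookkeeping of how monochromatic regions, bicolor faces, and tricolor edges interlock is delicate.  I expect this to be the trickiest step, consistent with the paper's own remark that this is ``the trickiest and least direct argument.''

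\textbf{Closedness.}  For closedness I would argue that the extended phase is closed by showing its complement, the compact phase, is open.  By Theorem \ref{thm::annulusphasecharacterization}, a point $p$ lies in the compact phase precisely when $\Pr_p(E_n) < \alpha$ for \emph{some} finite $n>1$, where $\alpha\in(0,1)$ is the universal constant furnished by that theorem.  The crucial observation is that $E_n$ is an event depending on the colors of only finitely many cells of $\L$ (those in the radius-$3n$ cube).  Hence $p \mapsto \Pr_p(E_n)$ is a polynomial in $(p_1,p_2,p_3)$, and in particular is continuous on the simplex $\mathcal T$.  Therefore the set $\{p : \Pr_p(E_n) < \alpha\}$ is open for each fixed $n$, and the compact phase, being the union over $n$ of these open sets, is open.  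Consequently the extended phase is closed.

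\textbf{Assembling the argument.}  I would organize the write-up by first isolating the continuity/finite-dependence remark as a short lemma, yielding closedness immediately from Theorem \ref{thm::annulusphasecharacterization}; this is the easy half.  The bulk of the effort goes into non-emptiness, where I would make precise the separating-surface dichotomy, verify that the blocking configurations are genuinely complementary to the crossing event (so that their probabilities sum to at least $1$), and exploit the permutation symmetry of the three colors at the balanced point to conclude that the crossing probability cannot be pushed below a fixed threshold as $n\to\infty$.  The hardest part, as noted, will be establishing the correct topological duality statement in three dimensions and controlling the interplay between monochromatic clusters and the tricolor loop structure; once that dichotomy is in hand, the symmetry argument that yields the positive lower bound should be comparatively routine.
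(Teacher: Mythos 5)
Your closedness argument is correct, and it is in fact cleaner than the paper's: the paper proves openness of the compact phase via a quantitative coupling estimate (perturbing $p$ to $q$ changes the configuration on a radius-$3R$ box with probability at most $2\|q-p\|_1\cdot KR^3$), whereas you observe that Theorem \ref{thm::annulusphasecharacterization} characterizes the compact phase as $\bigcup_{n>1}\{p\in\mathcal T : \Pr_p(E_n)<\alpha\}$, and each $E_n$ depends on finitely many cells, so each $p\mapsto\Pr_p(E_n)$ is a polynomial and each set in the union is open. That half stands. (The paper needs the quantitative coupling anyway for non-emptiness, which is presumably why it is organized as it is.)

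The non-emptiness half, however, has a genuine gap, and that gap is essentially the whole theorem. The ``self-duality-type relation'' you invoke --- tricolor crossing of $A_n$ versus three color-symmetric blocking configurations --- is never constructed, and there is strong evidence it cannot be constructed along the lines you describe: any symmetry argument giving a uniform lower bound on $\Pr_p(E_n)$ at $p=(1/3,1/3,1/3)$ (or at any explicitly identified $p$) would settle what the paper states as an open problem, and the paper stresses that a ``peculiar feature'' of its own argument is that it cannot certify any particular $p$. Worse, the logical skeleton of your sketch does not yield a lower bound even if the dichotomy were granted: from $\Pr_p(E_n)+\sum_{i=1}^{3}\Pr_p(B_i)\geq 1$ and $\Pr_p(B_1)=\Pr_p(B_2)=\Pr_p(B_3)$ one concludes nothing about $\Pr_p(E_n)$ --- the three blocking events could each have probability $1/3$ while $\Pr_p(E_n)\to 0$. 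The planar mechanism you are imitating works only because the dual blocking object is an event of the \emph{same type} as the primal crossing (a crossing of the dual lattice), so symmetry transfers a lower bound to the crossing itself; in 3D the complement of a path crossing is a separating \emph{surface}, not a path, and no such identification exists.

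The paper's actual route is entirely different and non-constructive, which is why it sidesteps the open question. Assume the extended phase is empty; then the compact phase is all of $\mathcal T$, and compactness of $\mathcal T$ together with the coupling estimate extracts a single scale $r$ at which $\Pr_p[C_r(x)\conn(C_{3r}(x))^c]\leq\alpha/2$ uniformly over all $p\in\mathcal T$ and $x\in\L$, hence uniform exponential decay of crossing probabilities by Lemma \ref{lem:compact phase}. This is contradicted by a topological forcing argument: color a triangular prism $T_n$ with slowly varying probability vectors interpolating the whole simplex (pure colors along the three vertical edges, only two colors on each rectangular face), so no tricolor edge exits the prism; the flow identity of Proposition \ref{prop::flowthroughsurface} forces net flux one through every horizontal cross-section, hence an a.s.\ infinite tricolor path inside $T_n$ (Proposition \ref{prop:infinite paths in varying}), which uniform exponential decay forbids. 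Your closedness section can stand as written, but the non-emptiness section must be replaced by an argument of this kind; the separating-surface dichotomy is not a viable foundation.
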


The tricky part is showing that there exists at least one $p$ in the extended phase.  A peculiar feature of the argument is that it does not allow us to prove that any {\em particular} $p$ belongs to the extended phase.  It does not even tell us whether $p = (1/3,1/3,1/3)$ belongs to the extended phase.

Using the primary color rule for pigments (imagine each cell is coated in wet paint of its given color), we make the following definitions:

\begin{enumerate}
\item An {\em orange face} is a face incident to one red and one yellow cell.
\item A {\em green face} is a face incident to one yellow and one blue cell.
\item A {\em purple face} is a face incident to one blue and one red cell.
\end{enumerate}

Two faces are said to be adjacent if they have a boundary edge in common.  It is natural to consider percolation on the graph of faces. %(where two faces are adjacent if they share an edge).  
What kinds of phase transitions does one have for the large green clusters?  For our purposes, it will turn out to be useful to consider when the size of the green cluster containing a given face is a.s.\ finite {\em and} has an exponentially decaying law.   Clearly, if either the cluster of blue cells incident to a given face or the cluster of yellow cells incident to a given face has exponentially decaying law, then the cluster of green faces must have exponentially decaying law also.  It is well known that the the size of the origin-containing cluster decays exponentially in subcritical percolation (see, e.g., the reference text \cite{MR1707339}) which implies that the size of the green cluster decays exponentially whenever either the blue or yellow probabilities are subcritical.  We will show more than this:

\begin{thm}  \label{thm::orangegreenpurple} The green phase separation curve in Figure \ref{fig::phasediagram} is the graph of a Lipschitz function with Lipschitz norm at most $\sqrt{3}$ (i.e., at most the slope of the upper two boundary lines of the triangle).  This graph has no points in common with the critical percolation lines, except on the boundary of $\mathcal T$.  In other words, the green curve in Figure \ref{fig::phasediagram} lies strictly below both the blue and yellow lines in Figure \ref{fig::phasediagram}.
\end{thm}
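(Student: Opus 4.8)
The plan is to read off all three assertions from two ingredients: a pair of monotone recolorings that control how the green phase depends on $p$, and the elementary observation that a green cluster is sandwiched between a single yellow and a single blue cluster.

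\textbf{Monotonicity and the Lipschitz bound.} Write $R$ for the set of $p \in \mathcal T$ at which the size of the green cluster through a fixed face fails to decay exponentially, so that the green curve is $\partial R$. I would first set up two couplings. Lowering $p_1$ while raising $p_2$ (with $p_3$ fixed) can be realized by recoloring each cell that is red under the smaller-$p_2$ measure to yellow, independently, with the appropriate probability; this map only turns red cells yellow, so it preserves every existing yellow--blue face and can only create new ones. Hence the collection of green faces, and therefore the event that the green cluster through a fixed face has size at least $N$, is increasing under this recoloring, and likewise under the symmetric red$\to$blue recoloring that lowers $p_1$ while raising $p_3$. Consequently $R$ is closed under both operations. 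In the equilateral coordinates of Figure~\ref{fig::phasediagram} (where decreasing $p_1$ means decreasing height and the two edges through the red vertex have slopes $\pm\sqrt3$), these two operations move a point downward along directions of slope $\pm\sqrt3$; thus $R$ contains, with each of its points, the entire downward cone bounded by these two slopes. A region of this form has an upper boundary that is the graph of a function, and the dual statement for the complement (closed under the two opposite, upward cone directions) forces that graph to be Lipschitz with constant at most $\sqrt3$. By the yellow/blue symmetry of green faces the graph is even in the horizontal coordinate. This gives the first assertion.

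\textbf{Weak domination below the percolation lines.} Next I would record the structural fact underlying the comparison with site percolation. If two green faces share a bounding edge, that edge is a $3$-clique of cells, and a short case check shows their two yellow cells either coincide or are themselves adjacent; the same holds for their two blue cells. Hence the yellow cells met by a connected green cluster form a single connected yellow cluster $Y$, and its blue cells a single connected blue cluster $B$, with the green cluster contained in the faces incident to $Y$ (and to $B$). Since each cell has only $14$ faces, the green cluster has size at most $14\min(|Y|,|B|)$. Subcritical site percolation on $\mathcal L$ has exponentially decaying cluster size \cite{MR1707339}, so the green cluster decays exponentially whenever $p_2 < p_c(\L)$ or $p_3 < p_c(\L)$. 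Therefore $R \subseteq \{p_2 \ge p_c(\L)\}\cap\{p_3 \ge p_c(\L)\}$, which in the figure says exactly that the green curve lies weakly below both the yellow and the blue line.

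\textbf{Strict separation (the main obstacle).} It remains to upgrade ``weakly below'' to ``strictly below in the interior of $\mathcal T$.'' Using the Lipschitz bound together with the fact that the yellow line has slope exactly $-\sqrt3$, one checks that if $\partial R$ met the yellow line at an interior point it would be forced to coincide with that line all the way down to $\partial\mathcal T$; so it suffices to show green clusters decay exponentially at \emph{every} interior point of $\{p_2 = p_c(\L)\}$, i.e.\ that green percolation demands $p_2$ \emph{strictly} above $p_c(\L)$. This is a strict inequality between two critical points, and the sandwiching alone does not deliver it: at $p_2 = p_c(\L)$ the yellow clusters are only polynomially rare, so one must show that the positive density of red cells supplied by $p_1 > 0$ nevertheless fragments the yellow--blue interface so thoroughly that the green cluster through a fixed face becomes exponentially small. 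I expect this to be the hardest step, to be handled by an Aizenman--Grimmett-type essential-enhancement comparison (exhibiting green percolation as an essential restriction of yellow-cluster percolation, whose threshold therefore strictly exceeds $p_c(\L)$) or by a direct renormalization showing that red cells disconnect the interface surface at all scales. The same argument with the roles of yellow and blue reversed gives strict separation from the blue line, completing the proof.
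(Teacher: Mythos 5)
Your first two steps are sound and essentially match the paper: the monotone recolorings (red $\to$ yellow, red $\to$ blue) are exactly the coupling used in the paper's Proposition~\ref{prop:Lipschitz}, yielding that the non-decay region is a union of downward equilateral triangles and hence has a Lipschitz graph as upper boundary; and the sandwiching of a green cluster by a single yellow and a single blue cluster gives the weak inclusion $R \subseteq \{p_2 \ge p_c\} \cap \{p_3 \ge p_c\}$, which the paper also records. But the third step is where the theorem actually lives, and there you have a genuine gap: you correctly diagnose that strict separation is a strict inequality between critical points that sandwiching cannot deliver, and then you only \emph{name} two candidate techniques (an Aizenman--Grimmett essential-enhancement comparison, or a renormalization argument) without executing either. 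Neither is routine here: green-face percolation is not in any obvious way an enhancement of yellow site percolation in the Aizenman--Grimmett framework (it is a process on faces depending on two colors simultaneously), so even fitting the problem into that machinery would require real work. In addition, your reduction step has a soft spot: exponential decay \emph{at} every interior point of the line $p_2 = p_c$ does not by itself prevent $\partial R$ from accumulating on that line, since points of $R$ could approach it from the side $p_2 > p_c$ without ever touching it; one would further need openness of the exponential-decay region (a finite-size criterion for green clusters), which you do not supply.

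The paper closes this gap by a completely different and self-contained device (Theorem~\ref{thm:pc and BC}), which you may find instructive. One builds the coloring from auxiliary randomness: i.i.d.\ Bernoulli-$\eps$ variables $B(x,y)$ on \emph{directed} adjacencies and i.i.d.\ Bernoulli-$\beta$ variables $C_x$ on cells; a cell is blue if $C_x=1$, yellow if $C_x=0$ and all its outgoing $B$-variables vanish, and red otherwise. If a blue cell $x$ touches a yellow cell $y$, then the directed variable $B(y,x)$ must be $0$, so the blue endpoint of every green face lies in the set $G$ of blue cells having at least one incoming $B$-variable equal to $0$. The point of the directed edges is that membership in $G$ is determined by $C_x$ and the \emph{incoming} variables at $x$, which are disjoint across distinct cells; hence $G$ is exactly i.i.d.\ site percolation with parameter $\beta\bigl(1-\eps^{14}\bigr)$, \emph{strictly} smaller than $p_3=\beta$ as soon as $\eps>0$, i.e.\ as soon as red cells are present. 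Subcriticality of $G$ then forces exponential decay of green clusters, yielding the explicit quantitative bound $p_3 \ge p_c\bigl(1+(p_1/14)^{14}\bigr)$ for all $p \in \mathcal G$, which combined with your (and the paper's) Lipschitz/monotonicity step gives strict separation with no enhancement theory at all. So your outline identifies the right obstacle but does not overcome it; the missing ingredient is precisely this kind of coupling in which a small amount of the red probability is spent to depress the parameter of a dominating site percolation strictly below $p_3$.
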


The first sentence of Theorem \ref{thm::orangegreenpurple}  will turn out to be a simple monotonicity observation, which follows from the fact that the probability that there exists a green cluster of size greater than $N$ is an increasing function of the pair $(p_2, p_3)$ (the blue and yellow probabilities).  The second statement requires a short argument, which will in fact give an explicit (but non-optimal) upper bound on the height of the green curve.
%Note that if there is an infinite tricolor path, then the set of green (resp., orange, purple) faces incident to the path is an infinite cluster of green (resp., orange, purple) faces.
Since the existence of a long tricolor path through a vertex implies the existence of comparably large (up to constant factor) clusters of green, orange, and purple faces through that vertex, the extended phase (brown region in Figure \ref{fig::phasediagram}) is necessarily a subset of the region bounded between the green, orange and purple curves in Figure \ref{fig::phasediagram}.
This fact and Theorem \ref{thm::orangegreenpurple} together imply the following:

\begin{cor}
\label{cor:extended separated}
The boundary of the extended phase is of positive distance from each of the critical lines $p_1 = p_c$, $p_2 = p_c$, and $p_3 = p_c$ (where $p_c = p_c^{\mathrm{site}}(\mathcal L)$).  In other words, the (closed) brown set in Figure \ref{fig::phasediagram} is bounded away from the red, yellow, and blue lines in Figure \ref{fig::phasediagram}.
\end{cor}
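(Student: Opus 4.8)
The plan is to deduce this from the color-permuted versions of Theorem~\ref{thm::orangegreenpurple}, the inclusion recorded just before the corollary, and a one-line compactness argument. Write $K$ for the extended phase and let $O,P,G\subseteq\mathcal T$ denote the orange (red/yellow), purple (red/blue) and green (yellow/blue) large-cluster regions, i.e.\ the sets of $p$ for which the corresponding face-cluster law fails to decay exponentially. Since a long tricolor path through a vertex forces comparably large green, orange \emph{and} purple clusters through it, we have $K\subseteq O\cap P\cap G$. Moreover, as noted in the text, an orange cluster of size $>N$ forces a red or a yellow cluster of comparable size, so orange decays exponentially whenever $p_1<p_c$ or $p_2<p_c$; thus $O\subseteq\{p_1\ge p_c,\ p_2\ge p_c\}$, and symmetrically $P\subseteq\{p_1\ge p_c,\ p_3\ge p_c\}$ and $G\subseteq\{p_2\ge p_c,\ p_3\ge p_c\}$. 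In particular $K\subseteq\{p_1,p_2,p_3\ge p_c\}$, which (as $p_c>0$) is a compact subset of the open simplex; and by Theorem~\ref{thm::closedandnonempty} $K$ is closed, hence compact.

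The heart of the argument is to promote Theorem~\ref{thm::orangegreenpurple} from the separation \emph{curves} to the \emph{regions} they bound. Fix $\ell_1:=\{p\in\mathcal T:\ p_1=p_c\}$ and a point $q\in\ell_1\cap\mathrm{int}\,\mathcal T$. I claim $q\notin O$. Every neighborhood of $q$ contains points with $p_1<p_c$, which lie outside $O$, so $q$ is not interior to $O$; and if $q$ lay in $O$ then, not being interior to $O$, it would lie on $\partial O$, i.e.\ on the orange separation curve (the interior boundary of $O$), which by the orange form of Theorem~\ref{thm::orangegreenpurple} meets $\ell_1$ only on $\partial\mathcal T$ --- a contradiction. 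Hence $O\cap\ell_1\subseteq\partial\mathcal T$, and running the identical argument with $P$ in place of $O$ gives $P\cap\ell_1\subseteq\partial\mathcal T$. The same reasoning applied to $\ell_2:=\{p_2=p_c\}$ (using $O$ and $G$) and to $\ell_3:=\{p_3=p_c\}$ (using $P$ and $G$) shows that each of $O,P,G$ meets each of its two controlling critical lines only on $\partial\mathcal T$.

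To conclude, note that $K\subseteq(O\cap P)\cap\mathrm{int}\,\mathcal T$, so by the previous paragraph $K\cap\ell_1\subseteq (O\cap\ell_1)\cap\mathrm{int}\,\mathcal T=\emptyset$, and likewise $K\cap\ell_2=K\cap\ell_3=\emptyset$. Since $K$ is compact and each $\ell_i\cap\mathcal T$ is compact, disjoint compact sets are at positive distance, so $\mathrm{dist}(K,\ell_i)>0$ for $i=1,2,3$. This is exactly the statement that the (closed) brown set is bounded away from the red, yellow and blue lines, and since $\partial K\subseteq K$ the same positive bound holds for the boundary of the extended phase.

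I expect the delicate step to be the middle paragraph. On its own, Theorem~\ref{thm::orangegreenpurple} only says the curves are disjoint from the critical lines off $\partial\mathcal T$, which does \emph{not} give positive distance, since a curve may hug a critical line as it approaches the boundary of the simplex. The point that makes everything work is that $K$ is trapped in the compact region $\{p_i\ge p_c\}$ well inside $\mathrm{int}\,\mathcal T$ --- a consequence of subcritical exponential decay --- so only the interiors of the lines $\ell_i$ are relevant, and there the curve-level disjointness is genuine. I would be most careful to justify that the interior boundary of each large-cluster region is exactly the corresponding separation curve (so that a putative interior point of $\ell_i$ lying in the region really would have to sit on that curve), and to verify the book-keeping that the three lines $\ell_1,\ell_2,\ell_3$ are each controlled by two of the regions $O,P,G$.
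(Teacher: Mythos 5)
Your proof is correct, but it reaches the corollary by a genuinely different mechanism than the paper. The paper's own proof (the unnumbered corollary right after the proof of Theorem~\ref{thm::orangegreenpurple}) is quantitative rather than topological: it applies Theorem~\ref{thm:pc and BC}, which bounds \emph{every point of the region} $\mathcal G$ --- not merely points of the separation curve --- via $p_3 \geq p_c\bigl(1+(p_1/14)^{14}\bigr)$, together with the inclusions you also use ($K\subseteq O\cap P\cap G$ and $K\subseteq\{p_j\ge p_c\ \forall j\}$). This yields the explicit uniform bound $p_j \geq p_c\bigl(1+(p_c/14)^{14}\bigr)$ for every extended-phase $p$ and every $j$, from which positive distance to the lines $p_j=p_c$ is immediate; no compactness argument and no appeal to Theorem~\ref{thm::closedandnonempty} is needed, and the bound passes to $\overline K$ automatically since it is a closed condition. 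Your route instead treats Theorem~\ref{thm::orangegreenpurple} as a black box and compensates with topology: you identify the relative boundary of each region with its separation curve (legitimate, but it uses exactly the structure established inside Proposition~\ref{prop:Lipschitz}, namely that the interior of the region is the open subgraph of the Lipschitz function while the region sits in the closed subgraph), deduce that the regions miss the critical lines inside $\mathrm{int}\,\mathcal T$, and finish with ``disjoint compact sets are at positive distance,'' which requires the closedness of $K$ from Theorem~\ref{thm::closedandnonempty} as an extra input. What your approach buys is a verification that the corollary really does follow from the \emph{statements} of the quoted theorems (the pitfall you flag --- that curve-versus-line disjointness alone permits the curve to hug the line near $\partial\mathcal T$ --- is real, and your confinement of $K$ to $\{p_j\ge p_c\}\subseteq\mathrm{int}\,\mathcal T$ is the right fix); what it costs is the explicit constant, which the paper's argument provides for free. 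One small wording correction: an orange cluster of size $N$ forces \emph{both} a red \emph{and} a yellow cell cluster of size at least $N/14$ (adjacent orange faces share a cell, and the two cells of the other color at the shared edge are adjacent, so each monochromatic incident set is connected); your phrase ``a red or a yellow cluster'' would not justify the conclusion ``orange decays whenever $p_1<p_c$ or $p_2<p_c$,'' though the conclusion itself, as stated in the paper, is the correct one.
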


Note that Corollary \ref{cor:extended separated} in particular implies that there exists a $p = (p_1, p_2, p_3)$ for which there exist infinite clusters of all three colors, but nonetheless $p$ lies in the compact phase (and hence the length of the tricolor path through a given vertex has exponentially decaying law).

\subsection{Open problems} \label{subsec::open}

As this section illustrates, our list of fundamental questions about tricolor paths is much longer than our list of fundamental results.  There are several embarrassingly simple questions that have not been settled mathematically.  To describe one of the most fundamental issues, note that one could consider the following subdivisions of what we call the extended phase:

\begin{enumerate}
\item {\bf No-path extended phase:}  $\Pr_p(E_n)$ is bounded below independently of $n$ (i.e., $p$ is in the extended phase), but there is a.s.\ no infinite tricolor path.
\item {\bf Single-path extended phase:} There is a.s.\ exactly one infinite tricolor path.
\item {\bf Many-path extended phase:} There are a.s.\ infinitely many infinite tricolor paths.
\end{enumerate}

However, we are not able to determine whether any one of these phases is empty or not.  One natural guess would be that the extended phase corresponds to a convex shape bounded by a simple boundary curve, such as the brown region in Figure \ref{fig::phasediagram}, and that the many-path extended phase corresponds to interior of that region, while the no-path extended phase corresponds to the boundary.  On the other hand, we have not yet even answered the following:

\begin{ques} \label{ques::zerooneinfinity} Is it true that, for any $p$, the number of infinite tricolor paths is a.s.\ $0$, $1$, or $\infty$? \end{ques}

Note that the arguments used to show that the number of percolation clusters a.s.\ belongs to $\{0,1\}$, as in \cite{burton1989density}, do not work here, because if there are two infinite tricolor paths, it is not possible to ``join'' them to each other by changing the colors of finitely many cells.  Assuming that the answer to Question \ref{ques::zerooneinfinity} is nonetheless yes, one can ask the following:

\begin{ques} Which of the extended sub-phases mentioned above (no-path, single-path, and many-path) are non-empty? \end{ques}
\begin{ques} Which one (if any) does $p = (1/3,1/3,1/3)$ belong to? \end{ques}
\begin{ques} Are there a.s.\ infinitely many infinite paths for all $p$ in the interior of extended phase?\end{ques}
\begin{ques} Is there a.s.\ an infinite path when $p$ is on the boundary of the extended phase? (This is analogous to the question of whether one has percolation at $p_c$.)  \end{ques}
\begin{ques} What can one say about the extended phase (or the sub-phases mentioned above) as a set?  Is it connected?  Is it convex?
\end{ques}

Let $\mathcal A$ be the set of $p$ values for which one has percolation of all three bi-color face types (green, orange and purple).  That is, $\mathcal A$ is the set bounded between the green, orange, and purple curves in Figure \ref{fig::phasediagram}.  Then we ask the following:

\begin{ques} Is $\mathcal A$ connected?  Is it convex?  Does the boundary of the extended phase intersect $\partial \mathcal A$?
\end{ques}

The following questions address the existence of scaling limits.  One would expect any scaling limit to be rotationally invariant, but we cannot prove that this is necessary.

\begin{ques} Do the infinite paths scale to Brownian motion when $p$ is in the interior of the extended phase?\end{ques}
\begin{ques} What happens if $p$ is on the boundary of the extended phase?  Is there a different kind of scaling limit (perhaps a higher dimensional analog of SLE$_6$) in this case?  Does the scaling limit depend on which $p$ on the boundary of the extended phase is chosen?  (Some physicists have speculated that different $p$ on the boundary of the extended phase should correspond to the same field theory \cite{nahum2012universal}, so it is reasonable to speculate that they might also correspond to the same type of random path.)
\end{ques}

As mentioned earlier in a footnote, the tiling of $\R^2$ by hexagons and the tessellation of $\R^3$ by truncated octahedra are both special cases of the so-called {\em permutohedron} tessellation of $\R^d$, as we discuss in the appendix.  This tessellation is classical and has appeared in many papers and contexts, e.g. \cite{kitto1994isomorphism, sloane1999sphere, ziegler1995lecture, 2013arXiv1301.3400T}. An important aspect of this tessellation in $d$ dimensions is that one has $d$ cells sharing each edge and $d+1$ cells sharing each vertex, and thus one can randomly assign each cell one of $d$ colors and consider the paths comprised of {\em $d$-color edges} (or ``full-spectrum edges''), which we define to be edges incident to one cell of each color.  There are other tessellations with this property, but this one is particularly simple and canonical.  One might expect that when $d$ was large it would be easier to show that $d$-color paths have Brownian motion as a scaling limit (perhaps using lace expansions or related techniques).

\begin{ques} Can convergence to Brownian motion be established in sufficiently high dimension $d$ when $p_1 = p_2 = \ldots = p_d = 1/d$? \end{ques}
\begin{ques} Do the long tricolor paths have a scaling limit (and if so what kind) when $d$ is large and $p$ is on the boundary of the extended phase? \end{ques}

We remark that, although we will not do this here, we believe that the proofs of the main results of this paper (the results described in Section \ref{subsec::newresults}) could in principle be extended to any dimension $d \geq 3$.

\subsection{Vortex line interpretation}
In this section we briefly remark that there is a standard ``vortex line'' interpretation of the tricolor path model, in which each tricolor edge comes with an associated unit of ``flow'', with a direction determined by the cyclic red-yellow-blue ordering, and no flow is assigned to other edges.

To construct this flow in slightly different way, recall that we have defined $\sigma$ as a random function on $\mathcal L$.
 Using this, we obtain a function $\sigma'$ on ordered pairs $(v,w)$ of adjacent elements in $\mathcal L$ by $\sigma'(v,w) = \eta(\sigma(v), \sigma(w))$ where
$$\eta( \textrm{red}, \textrm{yellow}) = \eta( \textrm{yellow}, \textrm{blue})
= \eta( \textrm{blue}, \textrm{red}) = -1,$$
$$\eta( \textrm{red}, \textrm{blue}) = \eta( \textrm{blue}, \textrm{yellow})
= \eta( \textrm{yellow}, \textrm{red}) = 1,$$
$$\eta( \textrm{red}, \textrm{red}) = \eta( \textrm{yellow}, \textrm{yellow})
= \eta( \textrm{blue}, \textrm{blue}) = 0.$$
If we consider a triangle with vertices labeled by the three colors (as in Figure \ref{fig::phasediagram}), then $\sigma$ is a map from $\mathcal L$ to the vertices of the triangle, and $\sigma'$ describes whether this function goes counterclockwise, goes clockwise, or stays constant as one moves from $v$ to $w$.  Alternatively, we may interpret $\sigma$ a function to integers modulo $3$, and $\sigma'$ as a discrete gradient of $\sigma$.
If $(v,w,x)$ is a triple of mutually adjacent vertices in $\mathcal L$, which describes a directed edge of the tessellation, then we can write $\sigma''(v,w,x) = \frac13\bigl(\sigma'(v,w) + \sigma'(w,x) + \sigma'(x,v)\bigr)$.  This quantity (which can be interpreted as a ``discrete curl'' of $\sigma'$) is zero unless the edge is tricolor, in which case it is $1$ or $-1$, depending on the orientation of the edge.  In a sense, a directed tricolor path (comprised of a sequence of directed edges on which $\sigma''$ is equal to $1$) is a ``vortex line'' of $\sigma'$, and the existence of tricolor edges corresponds to the failure of the function $\sigma'$ to be a discrete gradient of an integer-valued function on $\mathcal L$.  If one has a non-self-intersecting cyclic loop $v_0,v_1, v_2, \ldots, v_k = v_0$ in $\mathcal L$, then one can interpret $\sigma'(v_0, v_1) + \sigma'(v_1,v_2) + \ldots + \sigma'(v_{k-1}, v_k)$ as ($3$ times) the total amount of $\sigma''$ flow passing through a surface bounded by this loop.  The following is easy to prove (e.g., by gradually retracting $\partial S$ to a point in such a way that it passes through one edge at a time).

\begin{prop} \label{prop::flowthroughsurface} Let $S$ be a smooth surface with smooth boundary, homeomorphic to a closed disc, embedded in $\R^3$ in such a way that it does not intersect any vertices of the truncated octahedron tessellation.  Let $v_0, v_1, \ldots v_k$ be the ordered sequence of cells encountered by the loop $\partial S$.  Then the number of tricolor edges passing through the surface $S$ (counted with sign) is
$$\frac13 \bigl( \sigma'(v_0, v_1) + \sigma'(v_1,v_2) + \ldots + \sigma'(v_{k-1}, v_k) \bigr).$$
\end{prop}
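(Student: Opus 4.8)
The plan is to prove a discrete Stokes' theorem by the continuous-deformation argument the parenthetical remark suggests. For a closed loop $\gamma$ lying on $S$, avoiding the vertices of the tessellation, and cutting off a subdisc $S_\gamma\subseteq S$, write $N(S_\gamma)$ for the signed number of tricolor edges piercing $S_\gamma$ and $v_0,v_1,\dots,v_k=v_0$ for the cyclic sequence of cells traversed by $\gamma$, and define the \emph{defect}
$$ D(\gamma) := N(S_\gamma) - \frac13\sum_i \sigma'(v_i,v_{i+1}). $$
The goal is to show $D(\partial S)=0$. First I would put everything in general position. Since $S$ avoids all vertices, a small perturbation of $S$ moves no edge-puncture across a vertex and no face-crossing of $\partial S$ across an edge, hence changes neither term; so I may assume $S$ meets each edge transversally in finitely many interior points and $\partial S$ crosses only face interiors. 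Then both terms of $D$ are well defined and locally constant under such perturbations.

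The key structural observation is that $\sigma'$ behaves like a closed discrete $1$-form on the complement $\R^3\setminus\Ee$ of the union $\Ee$ of all edges: for any triple of mutually adjacent cells the curl $\sigma''$ vanishes off tricolor edges, so the circulation $\frac13\sum_i\sigma'(v_i,v_{i+1})$ around a loop is invariant under homotopies of the loop within $\R^3\setminus\Ee$ (crossing faces, but not edges, only inserts cancelling back-and-forth excursions). Its one nontrivial ``period'' is the circulation of a small loop encircling a single edge $e$ once: such a loop crosses the three faces incident to $e$, visiting the three incident cells $a,b,c$ in cyclic order, so its circulation is $\sigma'(a,b)+\sigma'(b,c)+\sigma'(c,a)=3\,\sigma''(a,b,c)$.

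Now, because $S$ is a disc, I would contract $\partial S$ to a point through a family of loops $\gamma_t$ ($0\le t\le1$) on $S$, with $\gamma_0=\partial S$, each bounding a subdisc $S_t\subseteq S$, with $S_0=S$ and $S_1$ a single point; by transversality this can be arranged so that as $t$ increases $\gamma_t$ sweeps across the finitely many edge-punctures of $S$ one at a time and never meets a vertex. Between consecutive sweep events $\gamma_t$ moves by a homotopy in $\R^3\setminus\Ee$ and no puncture crosses $\partial S_t$, so by the previous paragraph both $N(S_t)$ and the circulation are constant, whence $D(\gamma_t)$ is constant. At $t=1$ the point $S_1$ lies in a single cell, so $N=0$ and the circulation is $0$, giving $D(\gamma_1)=0$. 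It remains to check $D$ is unchanged across one sweep event, where $\gamma_t$ passes over the puncture $P$ of an edge $e$ incident to cells $a,b,c$. Locally $a,b,c$ appear as three sectors separated by the faces $ab,bc,ca$ in a disc transverse to $e$; as $\gamma_t$ slides past $P$, the subdisc $S_t$ loses the puncture of $e$, so $N(S_t)$ changes by $\pm\sigma''(a,b,c)$, while the part of the loop that moves past $P$ differs before and after by a small loop encircling $e$ once, changing the circulation by $\pm 3\,\sigma''(a,b,c)$ and hence the second term of $D$ by $\pm\sigma''(a,b,c)$. With consistent orientations the two changes cancel, so $\Delta D=0$; combined with $D(\gamma_1)=0$ this yields $D(\partial S)=0$, which is the claim.

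The main obstacle is the orientation bookkeeping. One must fix an orientation of $S$ (giving a co-orientation for the signed edge count and a traversal direction for $\partial S$), and then verify that the direction in which the small loop winds around $e$ is precisely the one making its circulation equal $+3\,\sigma''(e)$ rather than $-3\,\sigma''(e)$, and that the sign with which $e$ leaves $N(S_t)$ flips compatibly. Once a single right-hand-rule convention linking the orientation of $S$, the traversal of $\partial S$, and the $\pm1$ value of $\sigma''$ on a directed tricolor edge is fixed, the cancellation is forced. The only other point needing care is the transversality claim that the contraction can be taken to cross edge-punctures one at a time and miss all vertices, which follows from a standard Sard-type genericity argument applied to a smoothing of the contracting family.
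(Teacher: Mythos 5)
Your proof is correct and follows precisely the route the paper itself indicates: the paper offers only the parenthetical hint of ``gradually retracting $\partial S$ to a point in such a way that it passes through one edge at a time,'' and your defect-invariance argument --- homotopy invariance of the circulation away from edges, plus the period computation $\sigma'(a,b)+\sigma'(b,c)+\sigma'(c,a)=3\,\sigma''(a,b,c)$ around a single edge cancelling the change in the signed puncture count --- is exactly a careful implementation of that retraction. No gap; this is the paper's approach, fleshed out.
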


Figure \ref{fig::straightpath} is meant to provide some intuition about the flow interpretation.  Once we condition on the cells along a long tricolor path, there is an ``expected flow'' in the opposite direction.  When we continue the long tricolor path, this phenomenon in some sense ``encourages'' the path to retrace its past, instead of exploring new territory.  If one of the $p_i$ is very close to $1$ (which in particular implies that $p$ is in the compact phase) then this effect will be overwhelming (and the continuation of the tricolor path segment between the cells shown in the figure will indeed stay close to the path segment with high probability, until it forms a loop).

\begin{figure}[htbp]
\begin{center}
\includegraphics[width=2in]{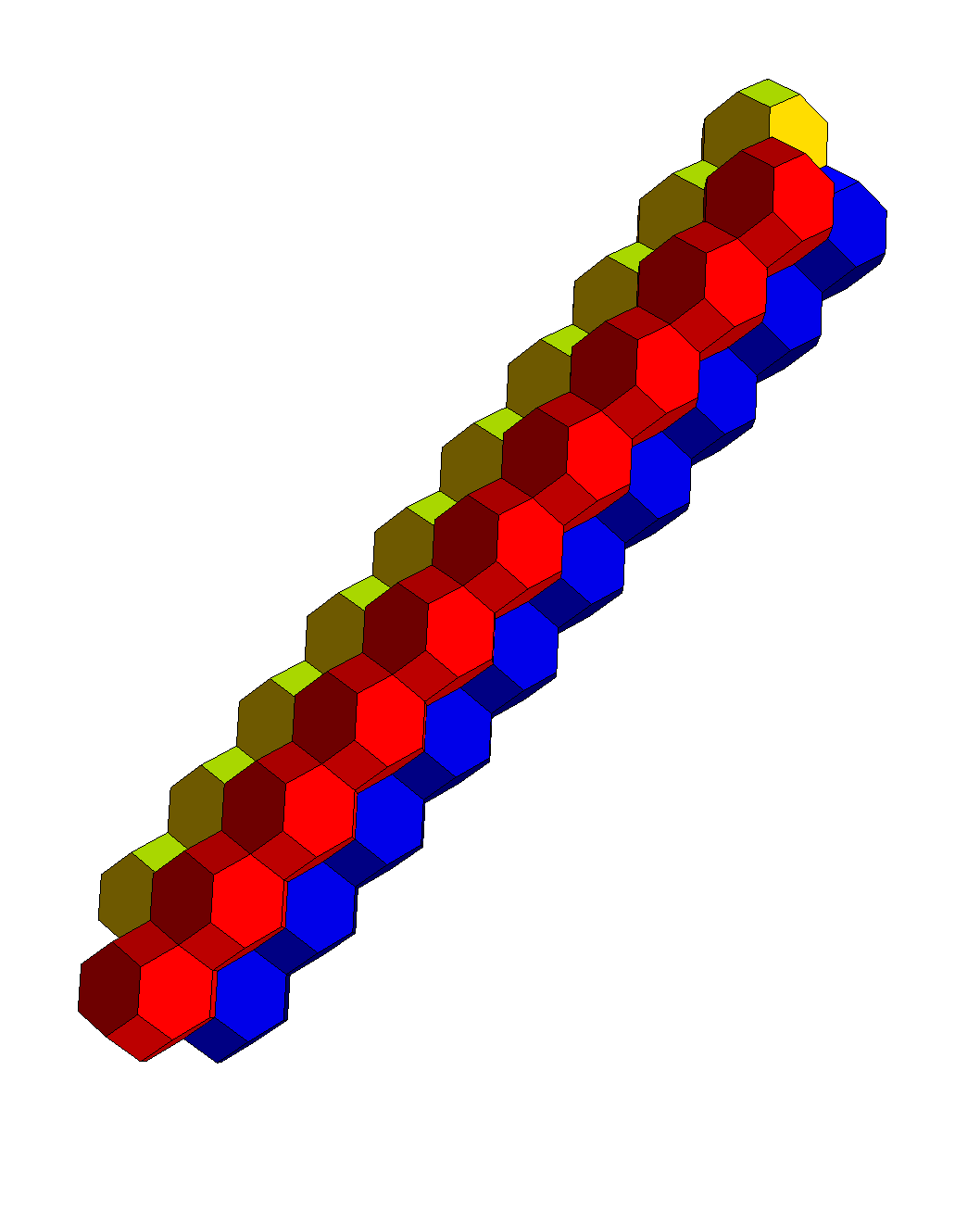}
\includegraphics[width=2.4in]{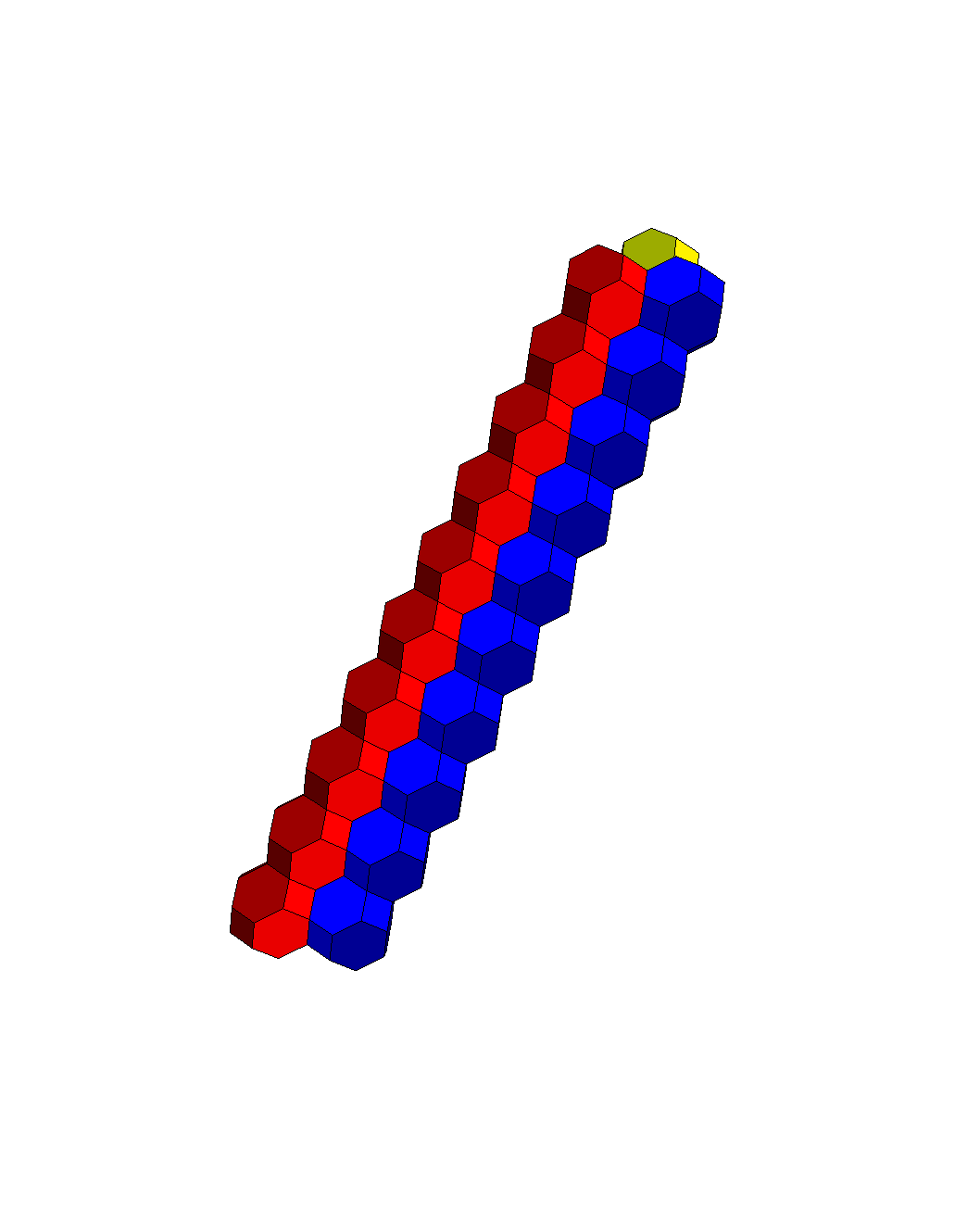}
\caption{\label{fig::straightpath}Two rotated views of a path obtained by letting $i$ range from $1$ to $10$ and coloring cells $(i,i,i)$ red, cells $(i,i,i+2)$ yellow, and cells $(i-1,i+1,i+1)$ blue.  The result is a ``straight'' tri-color path in the $(1,1,1)$ direction.  There are three bicolor paths on the boundary of the cluster of cells, running from one endpoint to the other.  If we start with this configuration, and color the remaining cells according to $p=(p_1,p_2,p_3)$, then there is an expected flow of $p_1$ along the blue-yellow path, $p_2$ on the blue-red path, and $p_3$ along the yellow-blue path.  This expected flow runs in the opposite direction of the flow on the tricolor path.  In a sense, once we are given the colors of the cells on the long tricolor path, the flow along the path is offset by an ``expected flow'' in the opposite direction.}
\end{center}
\end{figure}

%
%\subsection*{Acknowledgments}
%This research was supported by Grant No.\  2010357
%from the United States-Israel Binational Science Foundation (BSF) and by NSF grants DMS 064558 and 1209044.  We would like to thank Vladas Sidoravicius for useful discussions,
%and also the European Science Foundation Short Visit Grant within the framework of the ESF Activity entitled 'Random Geometry of Large Interacting Systems and Statistical Physics'.
%

\section{Proofs}

In this section we prove our main results: Theorems \ref{thm::annulusphasecharacterization}, \ref{thm::exponentialdecayofpathlength}, \ref{thm::closedandnonempty}, and \ref{thm::orangegreenpurple}

%\proofof{Theorems \ref{thm::annulusphasecharacterization} and
%\ref{thm::exponentialdecayofpathlength}}
\subsection{Proof of Theorems \ref{thm::annulusphasecharacterization} and
\ref{thm::exponentialdecayofpathlength} }

The following is a fairly standard observation about dependent percolation.  If a percolation model has only short range dependence, and each site separately has a small probability of being open, then the size of the origin-containing cluster has a law that decays exponentially.

\begin{prop}
\label{prop:Peierls}
For any $d,D$ there exist constants $\alpha = \alpha(d,D)<1$ and
$c_1 = c_1(d,D), c_2=c_2(d,D) > 0$
such that the following holds.
Consider a random site percolation $\sigma:\Z^d \to \{0,1\}$ with the property that for each $v \in \Z^d$, the value of $\sigma(v)$ is independent of the restriction of $\sigma$ to $\{w : \dist_{\Z^d}(v,w) > D \}$, and $\Pr [\sigma(v) = 1 ] \leq \alpha$.  Then
the probability that the origin is in an open
component of size at least $R$ is at most $c_1 \exp \sr{ - c_2 R }$.
In particular, the probability that $0$ is
connected to distance $R$ is at most $c_1 \exp \sr{ - c_2 R }$.
\end{prop}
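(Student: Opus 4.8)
The plan is to run a standard Peierls-type argument for finite-range-dependent percolation, showing that if the marginal open probability $\alpha$ is small enough (depending on $d,D$), then large open clusters are exponentially unlikely. The key idea is to pass from the dependent model to a site percolation on a coarsened lattice whose blocks are chosen far enough apart that their states become \emph{independent}, and then to run a contour/counting bound on that independent model.

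\textbf{Step 1: Coarsening to recover independence.} First I would tile $\Z^d$ by disjoint cubes of side length roughly $D$ (say side $2D+1$), so that the cube $Q$ containing a site $v$ has diameter exceeding $D$; the events determined by $\sigma$ restricted to cubes that are sufficiently separated are then mutually independent by the finite-range hypothesis. Declare a block \emph{bad} if it contains at least one open ($\sigma = 1$) site. By a union bound over the $(2D+1)^d$ sites in a block, $\Pr[\text{block bad}] \leq (2D+1)^d \alpha =: q$. Because two blocks whose centers are at $\Z^d$-block-distance more than some fixed constant $k = k(D)$ are determined by disjoint, $D$-separated sets of sites, the family of bad-block indicators is a $k$-dependent percolation on the coarse lattice.

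\textbf{Step 2: From $k$-dependence to a dominating independent field.} To avoid fighting with the dependence directly, I would invoke a Liggett--Schonmann--Stacey-style domination result: a $k$-dependent field of indicators each with marginal at most $q$ is stochastically dominated by an i.i.d.\ Bernoulli($q'$) field, where $q' \to 0$ as $q \to 0$. Concretely, one may also just group the coarse blocks into $(k+1)^d$ sublattices on each of which the bad indicators are genuinely independent, and handle the dependence combinatorially. Either way, the upshot is that by choosing $\alpha$ small we can make the effective independent open probability $q'$ as small as we like.

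\textbf{Step 3: Peierls contour bound and conversion of scales.} Now on the coarse lattice with independent Bernoulli($q'$) bad blocks, if the origin's open cluster in the original model reaches $\Z^d$-distance $R$, then the corresponding bad blocks form a connected set in the coarse lattice of cardinality at least $\gtrsim R/D$ (each block has bounded diameter, so one needs proportionally many to span distance $R$). The number of connected subsets of the coarse lattice of size $m$ containing a fixed block is at most $C^m$ for a constant $C = C(d)$ depending only on the lattice degree, and each such set is all-bad with probability $(q')^m$. Summing $\sum_{m \geq c R/D} (C q')^m$ gives a geometric series that converges and is bounded by $c_1 \exp(-c_2 R)$ once $q'$ is chosen with $C q' < 1$ — which we arrange by taking $\alpha$ small, i.e.\ by the definition of $\alpha = \alpha(d,D)$. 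This simultaneously proves both the ``component of size at least $R$'' statement (an open component of size $R$ forces at least $\gtrsim R/(2D+1)^d$ bad blocks) and the ``connected to distance $R$'' statement.

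\textbf{The main obstacle} is Step 2: correctly handling the short-range dependence so that a clean independent counting bound applies. The technical care is in setting the block size and separation $k$ so that (i) blocks far apart are truly independent under the range-$D$ hypothesis, and (ii) the combinatorial cost of the dependence is absorbed into the constant $C$ without destroying the smallness of $q'$. Once the correct scales are fixed, the counting and the geometric-series estimate are routine; the entire content is in choosing $\alpha(d,D)$ small enough that $C q' < 1$.
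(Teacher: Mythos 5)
Your overall architecture (coarse-grain, then Peierls counting) is sound, but Steps 1--2 rest on an independence claim that does not follow from the hypothesis of the proposition, and this is a genuine gap. The assumption is only \emph{pointwise}: for each single site $v$, the value $\sigma(v)$ is independent of the restriction of $\sigma$ to $\{w : \dist_{\Z^d}(v,w) > D\}$. This does not imply that the restrictions of $\sigma$ to two well-separated cubes are independent of each other, which is exactly what you need both for your claim that the bad-block field is $k$-dependent (in the Liggett--Schonmann--Stacey sense, where restrictions to separated \emph{sets} must be independent) and for your sublattice alternative. A toy counterexample: let $v_1,v_2$ lie in one cube and $w_1,w_2$ in a distant one, let $\sigma(v_1),\sigma(v_2),\sigma(w_1)$ be i.i.d.\ fair bits, and set $\sigma(w_2)=\sigma(v_1)\oplus\sigma(v_2)\oplus\sigma(w_1)$. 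Each single variable is independent of the joint field far from it, so the pointwise hypothesis holds, yet $(\sigma(v_1),\sigma(v_2))$ and $(\sigma(w_1),\sigma(w_2))$ are not independent (their parities always agree). So ``blocks far apart are truly independent'' is not available, and LSS domination cannot be invoked as stated.

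The paper's proof avoids block events entirely and needs only the pointwise hypothesis: given a connected set $S \ni 0$ of open sites, greedily remove cubes of radius $D$ to extract $B \subset S$ with $|B| \geq |S|\,(2D+1)^{-d}$ whose points are pairwise at distance $>D$; then bound $\Pr[\textrm{all of } B \textrm{ open}] \leq \alpha^{|B|}$ by peeling one site at a time --- at each step the site being peeled is independent of the joint configuration at all remaining sites of $B$, since those lie at distance $>D$ from it. Combining this with the $C^{|S|}$ bound on the number of connected sets containing the origin gives the geometric sum. Your argument can be repaired in the same spirit without any domination theorem: for a connected family of $m$ pairwise-separated bad blocks, take a union bound over the choice of one open site in each block and peel those $m$ sites as above, yielding a bound of $\bigl((2D+1)^d\alpha\bigr)^m$; but at that point the coarse-graining is doing the same job as the paper's extraction of $B$. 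The lesson is that the dependence must be handled by single-site peeling rather than by factorizing block events; the LSS machinery is both unavailable under the stated hypothesis and unnecessary.
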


\begin{proof}
Let $S$ be a finite connected subset of $\Z^d$ containing $0$.
By repeatedly removing cubes of radius $D$,
one can show that there exists a subset $B \subset S$ such that
\begin{itemize}
\item $|B| \geq |S| \cdot (2D+1)^{-d}$.
\item For every two vertices $a \neq b \in B$ we have that $\dist_{\Z^d}(a,b) > D$.
\end{itemize}
Thus, since vertices at distance greater than $D$ are independent,
$$ \Pr [ \textrm{ all vertices in $S$ are open } ] \leq \alpha^{|B|} \leq \alpha^{|S| \cdot (2D+1)^{-d} } . $$
It is well known that the number of possible choices for a finite connected subset $S$ containing $0$ of size $|S|=n$
is at most $C^n$ for some large enough constant $C=C(d)$ (in fact, $C=7^d$ suffices, see \eg \cite[Chapter 4.2]{MR1707339}).
Thus, if $\alpha < C^{-(2D+1)^d}$, %$p^{(2D+1)^{-d} } \cdot C < 1$
then for any $R>0$,
the probability that $0$ is in an open
component of size at least $R$ is at most the probability that there
exists an open connected subset containing $0$ of size at least $R$, which is bounded by
$$ \sum_{k \geq R} \alpha^{k (2D+1)^{-d} } C^k \leq  \sr{ C \alpha^{(2D+1)^{-d} } }^R  \cdot
\frac{1}{1 - C \alpha^{(2D+1)^{-d} } } . $$
%Since the number of possible vertices at distance at most $R$ from $0$ is polynomial,
%the exponential decay still wins, with some adjustment of the constants.
\end{proof}

It will be convenient to consider colorings of $\L$ with varying probabilities,
although still independent.
Given a function $f:\L \to \mathcal T$ we may color
the cells in $\L$ independently so that $\Pr [ \sigma(x) = j ] = f(x)_j$
for all $x \in \L , j \in \set{1,2,3}$ (for simplicity we have identified the colors red, yellow, blue with
$1,2,3$ respectively).
We denote this probability measure $\Pr_f$.

Given $A,B \subset \R^3$ we use the notation $A \conn B$ to denote the event
that some point in $A$ is connected by a tricolor path to some point in $B$.
We write $x \conn A$ for $\set{x} \conn A$.

The following lemma is a generalization of Theorem \ref{thm::annulusphasecharacterization}.
The proof of  Theorem \ref{thm::annulusphasecharacterization} follows by taking
$f \equiv p \in \mathcal{T}$ in the lemma.

\begin{lem}
\label{lem:compact phase}
There exist constants $\alpha<1$ and $c_1, c_2 >0$
such that the following holds.
Let $C_r(z) = \set{ x \in \R^3 \ : \ ||x-z||_\infty \leq r }$.
If there exists $r>0$ such that
$\sup_{x \in \L} \Pr_{f} [ C_r(x) \conn (C_{3r}(x))^c ] \leq \alpha$, then
for any $x \in \L$ and $R>0$,
$$ \Pr_{f} [ C_R(x) \conn (C_{3R}(x))^c ] \leq c_1 \exp \sr{ - c_2 R/r } . $$
\end{lem}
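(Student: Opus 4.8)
The plan is to reduce Lemma \ref{lem:compact phase} to the abstract dependent-percolation estimate in Proposition \ref{prop:Peierls} by a renormalization (block) construction. The point is that a long tricolor path across a large annulus forces a chain of overlapping boxes, each of which is individually ``crossed'' by a tricolor path at the scale $r$ — and by hypothesis each such crossing event has probability at most $\alpha$ and depends only on the colors of cells within bounded distance of the box. So one should set up a coarse lattice of boxes at scale $r$, declare a box ``open'' (bad) if it is crossed by a tricolor path at scale $r$, and then observe that this is a short-range-dependent site percolation to which Proposition \ref{prop:Peierls} applies.

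More concretely, I would first rescale so that $r$ becomes the block size: tile $\R^3$ by a coarse lattice $r\Z^3$ (or $2r\Z^3$, chosen so the boxes overlap enough to guarantee connectivity transfers). For each coarse site $z$, let $Y_z$ be the indicator that $C_r(zr)\conn (C_{3r}(zr))^c$, i.e.\ that there is a tricolor path crossing the corresponding scale-$r$ annulus. Two facts need checking. First, the \emph{domination/dependence} property: since a tricolor edge depends only on the colors of the three cells incident to it, and a crossing of $C_{3r}$ involves only cells within $O(r)$ of $z r$, the random variable $Y_z$ is a function of the coloring restricted to a bounded neighborhood; hence $Y_z$ is independent of all $Y_w$ with $\dist(z,w)>D$ for a constant $D$ depending only on the geometry (a fixed multiple of the overlap, independent of $r$). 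Second, the \emph{marginal bound}: by the hypothesis $\sup_{x}\Pr_f[C_r(x)\conn (C_{3r}(x))^c]\leq\alpha$, each $Y_z$ is $1$ with probability at most $\alpha$. Choosing $\alpha$ to be the threshold $\alpha(3,D)$ from Proposition \ref{prop:Peierls} makes the block process subcritical.

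The geometric heart of the argument is the \emph{percolation lower bound}: a tricolor path from $C_R(x)$ to $(C_{3R}(x))^c$ must pass through a sequence of scale-$r$ boxes, and I claim every box it threads deeply enough has $Y_z=1$. Indeed, any coarse box $C_r(zr)$ that the path enters and whose enclosing box $C_{3r}(zr)$ the path also exits witnesses a scale-$r$ crossing, so $Y_z=1$. The boxes met by a connected macroscopic path form a connected set in the coarse lattice, of coarse-diameter at least $\sim R/r$. Therefore the event $C_R(x)\conn (C_{3R}(x))^c$ is contained in the event that the origin box (or a box near $xr^{-1}$) is connected, in the $Y$-percolation, to coarse-distance $\gtrsim R/r$. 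Applying Proposition \ref{prop:Peierls} with $d=3$ gives an upper bound $c_1\exp(-c_2 R/r)$, which is exactly the claimed estimate (after renaming constants).

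The main obstacle I anticipate is the bookkeeping that makes the coarse-connectivity claim airtight: one must choose the block size and overlap so that (i) a genuine tricolor crossing of the fine annulus always produces an \emph{open} coarse box — this requires the inner and outer boxes $C_r,C_{3r}$ to be nested with the right ratio so that ``path enters inner box and leaves outer box'' is forced whenever the path traverses a block — and (ii) the set of open boxes along the path is connected in the coarse adjacency so that Proposition \ref{prop:Peierls}'s ``connected to distance $R$'' hypothesis is met. Getting the constants ($D$, the overlap factor, and the comparison between fine Euclidean distance and coarse lattice distance) to line up is routine but is where care is needed; none of it should require any idea beyond the renormalization template, and crucially all constants can be taken independent of $r$ because the whole construction is scale-invariant once expressed in units of $r$.
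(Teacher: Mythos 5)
Your proposal is correct and follows essentially the same route as the paper's proof: the paper also coarse-grains by cubes $C_r(2rz)$, $z\in\Z^3$, declares $z$ open when $C_r(2rz)\conn (C_{3r}(2rz))^c$, uses the bounded diameter of cells in $\L$ to get independence at coarse distance $\geq 5$, and notes that a macroscopic crossing forces an open coarse path of length at least $R/r$, so that Proposition \ref{prop:Peierls} applies. The bookkeeping you flag (which boxes along the path are genuinely crossed, and the connectivity of the open coarse set) is handled in the paper at the same level of detail you describe, so no additional idea is needed.
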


\begin{proof}
Fix $r > 2$.
Consider the following tessellation by $r$-cubes of $\R^3$:
Let $G_r = \{ C_r(z) \ : \ z \in 2r \cdot \Z^3 \}$.
%where $C_r(z) = \set{ x \in \R^3 \ : \ ||x-z||_\infty \leq r }$.
Equip $G_r$ with a graph structure by letting $C_r(2rz) \sim C_r(2r z')$ if $z \sim z'$ in $\Z^3$.
This is just the Voronoi tessellation of $2r \Z^3$ in $\R^3$.
The graph $G_r$ is of course isomorphic to $\Z^3$.

%Note that if $z ,z' \in 2r\Z^3$,
%then $x \in z+C_r \cap z'+C_r$ only if  $||x-z||_\infty = ||x-z'||_\infty = r$.
%If $r$ is not an integer, and $x \in \L^*$, then this is impossible, since
%$x,z,z'$ must have integer coordinates.
%Thus, for non-integer $r$
%we can define $\pi_r : \L^* \to G_r$ by letting $\pi_r(x)$ be the unique $z \in \Z^3$
%such that $x \in 2r \cdot z + C_r$.
%
%Now, fix $r = n+1/2$ for some integer $n> 2$.
%So both $r$ and $3r$ are not integers.

A key observation in what follows, is that if $x,y$ are two vertices of a cell in $\L$,
then $||x-y||_\infty \leq 2$.
Thus, for any two subsets $A,B$ of edges of cells in $\L$ such that
the $L^\infty$-distance between $A$ and $B$ greater than $2$,
the configuration of tricolor edges in $A$
is independent of the configuration of tricolor edges in $B$.
%(This is why we want $n>2$.)

For each $z \in \Z^3$, declare $z$ open if $C_r(2r z) \conn (C_{3r}(2r z))^c$.
If $\dist_{\Z^3}(z,z') \geq 5$ then the $L^{\infty}$-distance between
$C_{3r}(2r z)$ and $C_{3r}(2r z')$ is at least $r > 2$,
so vertices in $\Z^3$ of distance at least $5$ are independent.

Also, note that the event $C_R(x) \conn (C_{3R}(x))^c$ implies that
there exist vertices $z,z' \in \Z^3$ with
$R - r \leq || 2r z - x ||_\infty \leq R$ and $3R \leq || 2r z' - x ||_\infty \leq 3R+r$
such that $z$ and $z'$ are connected by an open path in $\Z^3$.
The number of possible choices for such $z,z'$ is polynomial in $R/r$,
and $\dist_{\Z^3}(z,z') \geq R/r$.
By Proposition \ref{prop:Peierls}, the probability of this is at most
$ c_1 \exp \sr{ - c_2 R/r}$, where $c_1,c_2$ are universal constants,
provided that the probability that any vertex $z$ is open is at most some fixed $\alpha <1$.

Thus, for some $\alpha<1$,  if $\sup_x \Pr_{f} [ C_r(x) \conn (C_{3r}(x))^c ] \leq \alpha$,
then for all $R$, and any $x$,
$$ \Pr_{f} [ C_R(x) \conn (C_{3R}(x))^c ] \leq c_1 \exp \sr{ - c_2 \tfrac{R}{r} } . $$
\end{proof}

%
%\begin{proof}[Proof of Theorem \ref{thm::annulusphasecharacterization}]
%Take $f \equiv p$ in Lemma \ref{lem:compact phase}.
%\end{proof}
%
%

%%%%%%%%%%%%%%
%%%%%%%%%%%%%%
%%%%%%%%%%%%%%

%\proofof{Theorem \ref{thm::exponentialdecayofpathlength}}

The proof of Theorem \ref{thm::exponentialdecayofpathlength} is now straightforward.

\begin{proof}[Proof of Theorem \ref{thm::exponentialdecayofpathlength}]
Since $p$ is in the compact phase, we may choose $r>0$ large enough so that
for any $z \in \Z^3$ we have that
$\Pr_p [ C_r(2r z) \conn (C_{3r}(2r z))^c ] < \alpha$, with
$\alpha$ as in Proposition \ref{prop:Peierls}.

If the origin is on a tricolor path of length $L$,
then by subsequently removing annuli of the form $C_{3r}(2r z) \setminus C_r(2r z)$,
we may find a connected subset $0 \in S \subset \Z^3$,
so that $|S| \geq \frac{L}{K r^3}$ for some
constant $K>0$ and
such that for every $z \in S$, $C_r(2r z) \conn (C_{3r}(2r z) )^c$
(the corresponding annulus is crossed by a tricolor path).
That is, the event that the origin is on a tricolor path of length at least $L$
implies that in the induced dependent site percolation on $\Z^3$,
$0$ is in an open component of size at least $\frac{L}{K r^3}$.
Since we chose $r$ so that any site is open with probability at most $\alpha$,
by Proposition \ref{prop:Peierls} we have that this probability
is at most $c_1 \exp \sr{ - c_2 \tfrac{L}{K r^3} }$, which is exponentially decreasing in $L$.
\end{proof}

%%%%%%%%%%%%%%%%%%%%
%%%%%%%%%%%%%%%%%%%%
%%%%%%%%%%%%%%%%%%%%

%\proofof{Theorem \ref{thm::closedandnonempty}}

\subsection{Proof of Theorem \ref{thm::closedandnonempty}}

In this section, we will consider a prism obtained by starting with the triangular array of cells shown in Figure \ref{fig::triangularlayer} and coloring randomly in the manner described in the caption to Figure \ref{fig::randomtriangularlayer}.  We then repeat for translations of the triangular array in the orthogonal direction, as shown in Figure \ref{fig::triangularprism}.

%$T = \mathcal{T} + \set{ \alpha \mbf{1} \ : \ \alpha \in \R } $.
%Let $T_n$ be the cells in $\L$ whose centers are in the inflation of the prism $n T$, together
%with the external boundary of those cells;
%that is,
%Let $T_n^{\circ} = \set{ x \in \L \ : \ x \in n T }$; these are the cells whose
%center is in the inflation of the prism $nT$.
%$T_n^{\circ}$ inherits a graph structure from $\L$, and so we may define
%$T_n = T_n^{\circ} \cup \p T_n^{\circ}$, where $\p S = \set{ x \not\in S \ : \ x \sim S %}$.

%Thus, for a cell $x \in T_n^{\circ}$ there is a unique probability vector $\mbf{p}$ such that
%$\tfrac1{n} x = \mbf{p} + \alpha \mbf{1}$.
%Denote this probability vector by $\mbf{p}_{x,n}$.
%For $x \in T_n$ there exists a unique point in $T$ minimizing the
%$L^1$-distance to $\tfrac1n x$.  That is, there exists a unique probability
%vector $\mbf{p}_{x,n}$ such that for some $\alpha$,
%$y = \mbf{p}_{x,n} + \alpha \mbf{1}$ minimizes
%the distance of $\tfrac1n x$ to $T$:
%$$ \min \set{ || \tfrac{1}{n} x-z||_1 \ : \ z \in T } = || x  - y ||_1  . $$
%(For $x \in T_n^{\circ}$ this point will just be $\tfrac1n x \in T$.)

%If we let $f(x) = \mbf{p}_{x,n}$ we obtain
%tricolor percolation on $T_n$
%with varying probabilities.
%Let $\Pr_{n}$ denote the probability measure of
%this coloring (and the corresponding tricolor percolation).

%We also require the definition of a {\em layer} in $T_n$.
%Intuitively, a layer is a cut-set in $T_n$ of vertices at the same ``height''.
%The precise definition is as follows.

To make this more formal, consider the sets
$$W_n : = \set{ x \in \L \ : \  x_1+x_2+x_3 = n, x_1 > 0, x_2 > 0, x_3 > 0 }.$$
Then $L_n:= W_{n-1} \cup W_n \cup W_{n+1}$ is a triangular array of the sort shown in
Figure \ref{fig::triangularlayer}.  If we then define $L_{k,n} = L_n + (k,k,k)$, then the prism in Figure \ref{fig::triangularprism} is a union of $L_{k,n}$ layers over a range of $k$ values.  Write $T_n = \bigcup_{k=-\infty}^\infty L_{k,n}$.
%Note that if $x \in W_k$ and has non-negative coordinates
%then $\tfrac1n x + (\tfrac13 - \tfrac{k}{n}) \mbf{1} \in \Lambda$
%so $\mbf{p}_{x,n} = \tfrac1n x + (\tfrac13 - \tfrac{k}{n}) \mbf{1}$.
%For an integer $k$ define the $k$-th layer in $T_n$ to be
%$$ L_{k,n} : = (W_{k-1} \cup W_{k } \cup W_{k+ 1} ) \cap T_n . $$

%$y$ must be in one of $W_{k+m}$
%for $|m| \leq 3$.
%So $T_n \setminus L_{k,n}$ is composed of two disjoint infinite graphs.

%As in Figure \ref{fig::randomtriangularlayer}, let $\Pr_n$ be the probability measure
%of the varying coloring that colors every cell $v$ independently, according to the
%probability vector $p = (p_1,p_2,p_3)$ such that $v = p_1 v_1 + p_2 v_2 + p_3 v_3$,
%where $v_1,v_2,v_3$ are the corners of the triangle.

To every cell $v \in L_n$ we may associate a probability vector $\mbf{p}_{v,n} \in \mathcal T$,
by letting $\mbf{p}_{v,n} = (p_1,p_2,p_3)$ where $v = p_1 v_1 + p_2 v_2 + p_3 v_3$ and
$v_1,v_2,v_3$ are the corner vertices of $L_n$.
Also, to every $v \in L_{k,n}$ we may associate $\mbf{p}_{v,n} = \mbf{p}_{u,n}$ where
$v = u+ (k,k,k)$.

Let $\Pr_n$ be the law of the varying coloring of the cells in $T_n$, by coloring each cell $v$
independently using the probability vector $\mbf{p}_{v,n}$.

\begin{figure}[htbp]
\begin{center}
\includegraphics[width=3.8in]{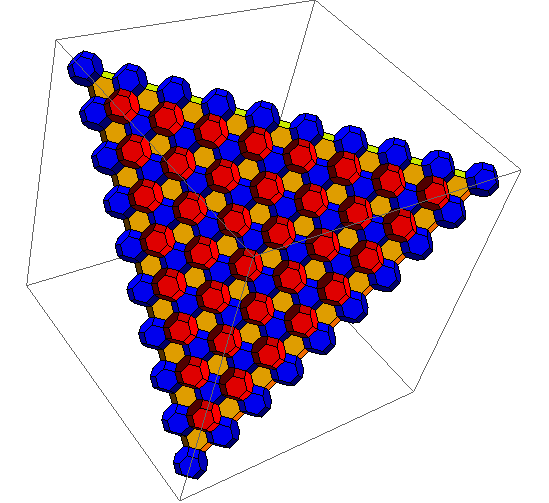}
\caption{ \label{fig::triangularlayer} %Triangular layers: fixed coordinate sums within first octant.
One layer of a prism, which is a union of subsets $W_{n-1},W_n,W_{n+1}$.}
\end{center}
\end{figure}

\begin{figure}[htbp]
\begin{center}
\includegraphics[width=3.8in]{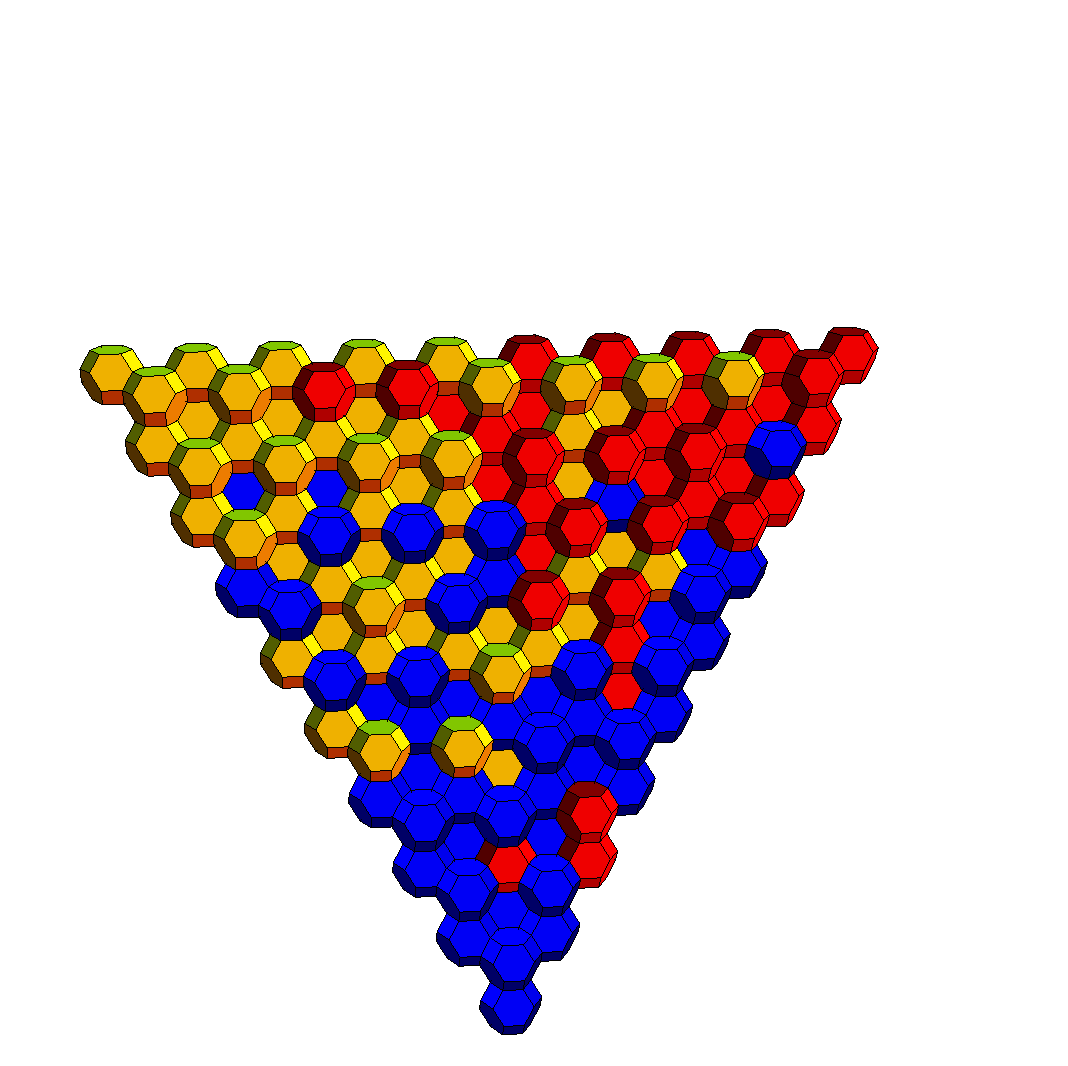}
\caption{ \label{fig::randomtriangularlayer} %Triangular layers: fixed coordinate sums within first octant.
A depiction of the layer $L_{k,n}$ from Figure \ref{fig::triangularlayer} together with a random coloring.  Each cell is colored independently of all others, but the probability vector depends on the location of the cell.  Consider one of the three levels from Figure \ref{fig::triangularlayer}, and label the corner vertices $v_1, v_2, v_3$.  The each $v$ on that level is colored according to the probability vector $(p_1, p_2, p_3)$ for which $v = p_1 v_1 + p_2 v_2 + p_3 v_3$.  The same is done for the other two levels.
}
\end{center}
\end{figure}

%\vspace{-1in}
\begin{figure}[htbp]
\begin{center}
\includegraphics[width=3.8in, trim = 0in 0in 0in 5in, clip = true]{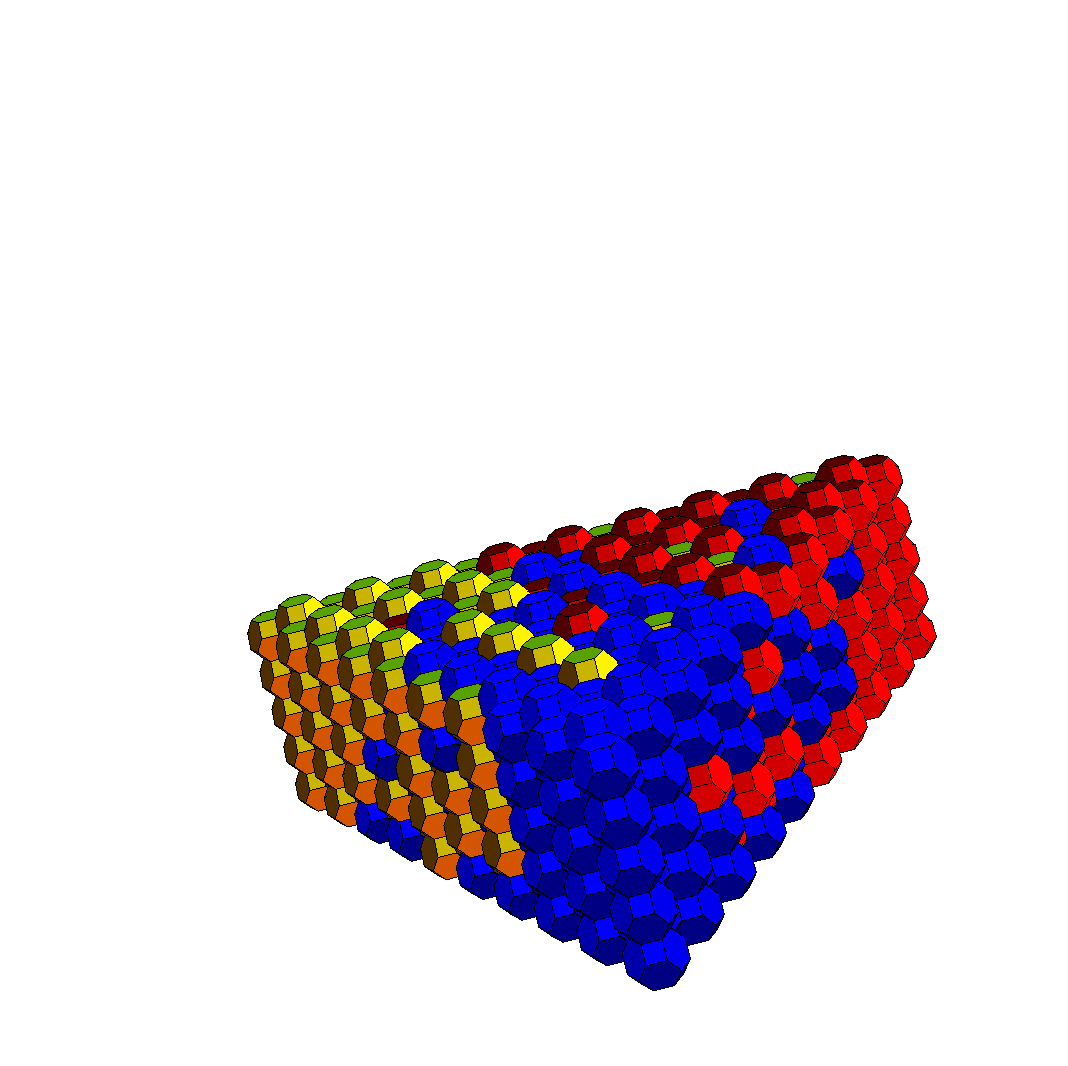}
\caption{ \label{fig::triangularprism} %Triangular layers: fixed coordinate sums within first octant.
This prism-shaped figure is produced by stacking independent copies of the randomly colored triangles described in Figure \ref{fig::randomtriangularlayer}.  Observe that on each of three vertical sides, only two color possibilities are allowed, and on each of three vertical edges, only one color is allowed.  In particular, there are no tricolor edges entering or exiting the prism on any of these sides.  On the other hand, Proposition \ref{prop::flowthroughsurface} implies that there is a net flow of one unit through each layer of the prism, and hence there must be least one tricolor path that passes through the inside of the prism, from the upper triangular face to the lower triangular face.
}
\end{center}
\end{figure}

\begin{prop}
\label{prop:infinite paths in varying}
For any $n$ consider the varying tricolor percolation on $T_n$ with law $\Pr_n$.
%as described in Figures \ref{fig::randomtriangularlayer} and \ref{fig::triangularprism}.
Then, $\Pr_n$-a.s.\ there must exist an infinite tricolored path in $T_n$.
\end{prop}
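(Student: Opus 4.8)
The plan is to exploit the ``vortex line'' / flow interpretation of Section 1.4 together with the boundary behaviour forced by the barycentric coloring. The crucial structural feature of $\Pr_n$ is that on each of the three vertical faces of the prism one of the barycentric coordinates vanishes identically, so only two colors can occur there, and on each of the three vertical edges two coordinates vanish, so that edge is monochromatic. Consequently no edge lying on the vertical boundary of $T_n$ can be tricolor, and the entire tricolor flow $\sigma''$ is trapped in the interior of the prism. The strategy is then to show that a fixed, nonzero amount of flow must cross every horizontal cross-section of $T_n$, and to deduce from conservation of flow that this flux cannot possibly be carried by finite loops alone.

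First I would fix a cross-sectional surface $S$ --- a topological disc transverse to the $(1,1,1)$-direction, whose boundary lies on the three vertical faces of the prism and which avoids all vertices of the tessellation --- and apply Proposition \ref{prop::flowthroughsurface} to evaluate the signed number of tricolor edges crossing $S$ as $\tfrac13\bigl(\sigma'(v_0,v_1)+\cdots+\sigma'(v_{k-1},v_k)\bigr)$, where $v_0,\dots,v_k=v_0$ is the sequence of cells met by $\partial S$. Since $\partial S$ runs in turn along the three vertical faces, on which only the color pairs $\{1,2\}$, $\{2,3\}$, $\{3,1\}$ appear and which join the respective monochromatic vertical edges, the sum telescopes on each face to the mod-$3$ color increment between two consecutive corners (there is no mod-$3$ wraparound \emph{within} a face, because only two colors are present there). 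Adding the three contributions, $\sigma$ winds exactly once around the color cycle, so the boundary sum equals $3$ and the net flux through $S$ is $1$. I would stress that this value is \emph{deterministic}: it depends only on the boundary constraints and not on the random colors in the interior, so it holds for every coloring in the support of $\Pr_n$ and for every such cross-section.

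To finish, I would invoke divergence-freeness of the tricolor flow: each tricolor vertex is incident to exactly two tricolor edges with consistent orientation, so the tricolor edges decompose into disjoint directed loops and bi-infinite directed paths. A finite loop is a closed directed curve and therefore has signed intersection number zero with the separating disc $S$, contributing nothing to the flux. Hence if every tricolor component in $T_n$ were a finite loop, the total flux through $S$ would be $0$, contradicting the value $1$ just computed. So at least one tricolor component must be an infinite path; because each layer of $T_n$ contains only finitely many cells, such a path is necessarily unbounded in the $(1,1,1)$-direction. As the whole argument is deterministic, the conclusion holds $\Pr_n$-surely, and in particular $\Pr_n$-a.s.

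The main obstacle is making the flux computation airtight: one must track the sign conventions of $\eta$ through all three vertical faces and verify that the telescoping genuinely yields a winding number of exactly $1$, and one must confirm that no tricolor flow can escape through the vertical sides (so that conservation applies to the closed-off prism). Once the deterministic flux $1$ is established, the topological fact that a finite loop carries zero net flux through $S$ is routine, and the existence of an infinite path is immediate.
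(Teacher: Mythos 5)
Your proposal is correct and follows essentially the same route as the paper: apply Proposition \ref{prop::flowthroughsurface} to a horizontal cross-section of the prism to get a deterministic net flux of one unit, note that the boundary color constraints forbid tricolor edges on the vertical sides, and conclude that finite loops (which have zero signed intersection with the separating disc) cannot account for this flux, so an infinite path must exist. Your write-up simply spells out the telescoping/winding computation and the separation argument in more detail than the paper does.
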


\begin{proof} Apply Proposition \ref{prop::flowthroughsurface} to a horizontal surface whose boundary passes through the boundary cells of the triangular array of Figure \ref{fig::randomtriangularlayer} in clockwise order, and such that the surface itself is contained within the array of cells in Figure \ref{fig::randomtriangularlayer}.  Proposition \ref{prop::flowthroughsurface} implies that the net amount of flow through this surface must be $1$.  On the other hand, since there are no tricolor edges entering or exiting $T_n$, we deduce that if all tricolor loops in $T_n$ were finite, then the net amount of flow through any such surface would have to be zero.
%Consider the following coloring of $L_{k,n}$.
%For any $x \in L_{k,n}$
%let $j \in \set{1,2,3}$ be the unique coordinate for which $x_j + \pi^{-j} > x_i + %\pi^{-i}$
%for all $i \neq j$ and color $x$ with the color $j$.
%($j$ is unique because $\pi^j \neq \pi^i$ for all $i \neq j$.)
%This colors $L_{k,n}$ is such a way that there is exactly one tricolored path in %$L_{k,n}$
%connecting $L_{k-2,n}$ to $L_{k+2,n}$.
%Let $A_{k,n}$ be the event that $L_{k,n}$ is colored in this way.
%Note that since $L_{k,n} \cap L_{k+3 , n} = \emptyset$ the events
%$(A_{3k,n} )_{k \in \Z}$ are all independent.

%Let $m$ be any integer.
%Since $\Pr_n [ A_{2k , n }  ] \geq 3^{-|L_{k,n} | } = 3^{-|L_{0,n} |} > 0$,
%it must be that a.s.\ there exist $3k_1 < m < 3k_2$ such that $A_{3k_1,n}$
%and $A_{3k_2,n}$ occur.
%Since the component of $L_{m,n}$ in $T_n \setminus (L_{3k_1,n} \cup L_{3k_2,n} )$
%is a connected finite graph, the unique tricolored path in $L_{3k_1,n}$
%must connect via a tricolored path in $L_{m,n}$ to the unique tricolored path in %$L_{3k_2,n}$.
%Also, if there were another infinite tricolor path incident to $L_{m,n}$, it would have %to be incident
%to at least one of $L_{3k_1,n}, L_{3k_2,n}$.  But this contradicts the fact that there %is only one
%tricolor path incident to each of these layers.

%Since this holds for any $m$, there a.s.\ must exist a unique infinite tricolored path
%in $T_n$ that is incident to any layer $L_{m,n}$.
\end{proof}

\begin{proof}[Proof of Theorem \ref{thm::closedandnonempty}]
%Suppose that the extended phase is empty.
For any $p$ in the compact phase
there exist $C,c > 0$ and $r = r(p)$ such that
such that for any $x \in \L$ and any $R>0$,
$$ \Pr_{p} [ C_R(x) \conn (C_{3R} (x))^c ] \leq C e^{-c R / r} . $$
For every $p$ in the compact phase let $R_0 = R_0(p)$
be large enough so that for all $R>R_0$ and all $x \in \L$,
$ \Pr_{p} [ C_R(x) \conn (C_{3R}(x))^c ] \leq \tfrac{\alpha}{4} , $
where $\alpha$ is the constant from Lemma \ref{lem:compact phase}.
By coupling with i.i.d.\ uniform random variables for each $x \in \L$,
we have that for all $q \in \mathcal{T}$
$$ \Pr_{q} [ C_R(x) \conn (C_{3R}(x) )^c ] \leq
\Pr_{p} [ C_R(x) \conn (C_{3R}(x))^c ] + 2 ||q-p ||_1 \cdot K R^3 , $$
for some universal constant $K>0$.
Thus for every $p$ in the compact phase
there exist $R(p), \eps(p) > 0$,
such that for any
$q \in \mathcal{T}$ with $|| q-p ||_1 < \eps (p)$ and all $r \geq R(p)$,
$$ \Pr_{q} [ C_r(x) \conn (C_{3r}(x))^c ] \leq \tfrac{\alpha}{2}   . $$
%\qquad \textrm{ and } \qquad
%\Pr_{\mbf{p} } [ C_r(x) \conn (C_{3r}(x))^c ] \leq \tfrac{p}{4} $$
This proves that the compact phase is open, and so the extended phase is closed.

Under the assumption that the extended phase  is empty,
the compact phase is the whole of $\mathcal{T}$, and specifically compact.
Since $(\set{ q \in \mathcal{T}  \ : \  || q-p ||_1 < \eps (p) } )_{p \in \mathcal{T}}$
is an open cover of $\mathcal{T}$,
we may extract a finite sub-cover, say
$(\set{ q \in \mathcal{T}  \ : \  || q-p^{(j)} ||_1 < \eps(p^{(j)}) })_{j=1}^m$.
Taking $r : = \max_{1 \leq j \leq m}  R(p^{(j)})$,
we get that for all $p \in \mathcal{T}$ and all $x \in \L$,
$$ \Pr_{p} [ C_r(x) \conn (C_{3r}(x))^c ] \leq \tfrac{\alpha}{2} . $$

If $x \sim y$ then $||x-y||_1 \leq 2$ so
$|| \mbf{p}_{x,n} - \mbf{p}_{y,n} ||_1 \leq \tfrac{2}{n}$.
Using i.i.d.\ uniform random variables for each $x \in \L$, we may couple
$\Pr_n$ with $\Pr_{\mbf{p}_{x,n}}$ so that
the configuration on $B(x,r)$ is not identical with probability at most
$$ \sum_{y \in B(x,r) } \tfrac{4}{n} \cdot \dist(y,x) \leq \frac{K r^4}{n} , $$
for some universal constant $K>0$.
Thus, for any $x \in \L$,
if $K r^4 < \tfrac{\alpha}{2} n$ then
$$ \Pr_n [ C_r(x) \conn (C_{3r}(x) )^c ] \leq
\Pr_{\mbf{p}_{x,n}} [ C_r(x) \conn C_{3r}(x))^c ] + \tfrac{\alpha}{2}
\leq \alpha . $$
By Lemma \ref{lem:compact phase}, we conclude that
there exist constants $C, c = c(r) > 0$ such that for all $n> \tfrac2\alpha K r^4$,
for any $R>0$ and any $x \in T_n$,
$$ \Pr_n [ C_R(x) \conn (C_{3R}(x))^c ] \leq C e^{-c R} . $$

Finally, by Proposition \ref{prop:infinite paths in varying}
there a.s.\ exists an infinite tricolored path that goes through $L_{0,n}$.
That is, for any $R>0$ such that there a.s.\ exists $x \in L_{0,n}$ such that
$x$ is connected by a tricolored path to distance $3R$ in $T_n$.
This implies that for any $R>0$, there a.s.\ exists $x \in L_{0,n}$ such that
$C_R(x) \conn (C_{3R}(x))^c$.
Since $|L_{0,n}| \leq K n^2$ for some constant $K>0$,
\begin{align*}
1 & \leq \sum_{x \in L_{0,n} } \Pr_n [ C_R(x) \conn (C_{3R}(x) )^c ]
\leq K n^2 \cdot C e^{-c R} .
\end{align*}
Taking $R \to \infty$ gives a contradiction.
\end{proof}

%\proofof{Theorem \ref{thm::orangegreenpurple}}
\subsection{Proof of Theorem \ref{thm::orangegreenpurple}}

\begin{prop}
\label{prop:Lipschitz}
The green phase separation curve in Figure \ref{fig::phasediagram} is the graph of a Lipschitz function with Lipschitz norm at most $\sqrt{3}$.
\end{prop}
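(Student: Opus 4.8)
The plan is to reduce the Lipschitz bound to a single monotonicity statement and then read off the constant $\sqrt{3}$ from the geometry of the equilateral triangle $\mathcal{T}$. Fix the drawing in Figure \ref{fig::phasediagram} so that the red vertex $(1,0,0)$ is the apex and the yellow--blue edge $\{p_1=0\}$ is the horizontal base; a probability vector $p=(p_1,p_2,p_3)$ then sits at screen coordinates $x=p_1/2+p_3$ and $y=(\sqrt{3}/2)\,p_1$, so that height is proportional to $p_1$ and the two upper edges (meeting at the red apex) have slopes $\pm\sqrt{3}$. Write $\mathcal{G}\subseteq\mathcal{T}$ for the green region, i.e.\ the set of $p$ for which $\Pr_p[\,|C_{\mathrm{green}}(f_0)|>N\,]$ fails to decay exponentially in $N$, where $C_{\mathrm{green}}(f_0)$ is the green cluster of a fixed face $f_0$. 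The green separation curve is $\partial\mathcal{G}$, and I will show that $\mathcal{G}$ is an up-set in the pair $(p_2,p_3)$ and deduce that $\partial\mathcal{G}$ is the graph of a $\sqrt{3}$-Lipschitz function.

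The monotonicity is a coupling argument. To compare $p=(p_1,p_2,p_3)$ with $p'=(p_1-\delta,\,p_2+\delta,\,p_3)$ for $\delta>0$, attach an independent uniform $U_v\in[0,1]$ to each cell and, under $\Pr_p$, colour $v$ blue if $U_v<p_3$, yellow if $p_3\le U_v<p_3+p_2$, and red otherwise; under $\Pr_{p'}$ use the same blue threshold $p_3$ but the larger yellow window $[p_3,p_3+p_2+\delta)$. Then every blue cell stays blue, every yellow cell stays yellow, and only some red cells turn yellow. Consequently no yellow--blue face is destroyed, so every green face under $\sigma$ is green under $\sigma'$, and green clusters can only grow and merge. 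Hence $\{|C_{\mathrm{green}}(f_0)|>N\}$ under $\sigma$ implies the same event under $\sigma'$, giving $\Pr_p[\,|C_{\mathrm{green}}(f_0)|>N\,]\le\Pr_{p'}[\,|C_{\mathrm{green}}(f_0)|>N\,]$ for every $N$; the symmetric coupling handles increasing $p_3$. Since each $\Pr_p[\,\cdot>N\,]$ is thus monotone in $(p_2,p_3)$, the non-exponential-decay property is preserved under increasing $(p_2,p_3)$, so $\mathcal{G}$ is closed under increasing $p_2$ and increasing $p_3$.

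To extract the constant, I translate ``closed under increasing $p_2$ and $p_3$'' into screen coordinates. Holding $p_3$ fixed and raising $p_2$ (lowering $p_1$) moves $p$ in the direction $(-1,-\sqrt{3})$, parallel to the red--yellow edge; holding $p_2$ fixed and raising $p_3$ moves it in the direction $(1,-\sqrt{3})$, parallel to the red--blue edge. Thus $\mathcal{G}$ is invariant under translation by the cone $C$ generated by these two vectors, namely $C=\{(a,b):b\le 0,\ |a|\le |b|/\sqrt{3}\}$, whose two boundary rays have slope $\pm\sqrt{3}$. Because the straight-down vector $(0,-1)\in C$, the vertical slice $\mathcal{G}\cap\{x=\mathrm{const}\}$ is a downward interval, so $\mathcal{G}=\{y\le h(x)\}$ for some function $h$ and $\partial\mathcal{G}$ is its graph. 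For $x_1<x_2$: any point of $\mathcal{G}^c$ over $x_1$ translated by $C^{\mathrm{up}}=-C$ stays in $\mathcal{G}^c$, and reaching the fibre over $x_2$ forces a vertical rise of at least $\sqrt{3}(x_2-x_1)$, giving $h(x_2)\le h(x_1)+\sqrt{3}(x_2-x_1)$; running the same argument starting over $x_2$ yields the reverse inequality. Hence $|h(x_2)-h(x_1)|\le\sqrt{3}\,|x_2-x_1|$, i.e.\ $h$ is Lipschitz with norm at most $\sqrt{3}$.

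I expect the only genuinely delicate point to be the bookkeeping in the coupling: one must check that converting red cells to yellow creates new green faces without ever removing an existing one (orange faces may be destroyed, but these are irrelevant), and that this face-wise domination really forces domination of cluster sizes and hence of the non-decay property. The geometric step is routine once the correct screen coordinates are fixed; its whole content is the observation that the level lines of $p_2$ and $p_3$ are parallel to the two upper edges of $\mathcal{T}$, which is precisely why the Lipschitz constant equals their slope $\sqrt{3}$.
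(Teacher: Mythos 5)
Your proof is correct and follows essentially the same route as the paper: the same coupling-based monotonicity showing that the green region $\mathcal{G}$ is an up-set in $(p_2,p_3)$, followed by the same geometric observation that such a set (equivalently, a union of equilateral triangles whose upper sides have slope $\pm\sqrt{3}$) has boundary given by a $\sqrt{3}$-Lipschitz graph. The only cosmetic differences are that you increase $p_2$ and $p_3$ one coordinate at a time instead of using the paper's single coupling handling both simultaneously, and that you phrase the geometry as cone translation-invariance in explicit screen coordinates rather than as a supremum of Lipschitz functions.
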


\begin{proof}
As usual, we use $1,2,3$ to indicate red, yellow and blue respectively. Let $\mathcal G$ be the set of $p = (p_1, p_2, p_3) \in \mathcal{T}$ for which the probability that the green face cluster containing a given face has more than $N$ faces does {\em not} decay exponentially fast with $N$.  We aim to show that there exists a Lipschitz curve, such as the one drawn in Figure \ref{fig::phasediagram}, such that $\mathcal G$ is the region bounded below that curve.

We first claim that if $p \in \mathcal{G}$ and $q \in \mathcal T$ with $q_2 \geq p_2$ and $q_3 \geq p_3$, then $q \in \mathcal G$.
This follows from the fact that we can couple $\Pr_p$ and $\Pr_q$ in such a way that if $(\sigma_p, \sigma_q)$ is sampled from the coupling then every cell that is yellow (resp.\ blue) in $\sigma_p$ is also yellow (resp.\ blue) in $\sigma_q$.
One explicit coupling is as follows.  Let $(U_x)_{x \in \L}$ be i.i.d.\ uniform-$[0,1]$ random variables.
Then for any probability vector $p$, let $\sigma_p(x)$ be yellow if $U_x < p_2$, blue if $1 - U_x < p_3$, and red otherwise.

We next observe that the above claim can be restated as the fact that for every $p \in \mathcal G$, the equilateral triangle {\em under} $p$ (i.e., the unique equilateral triangle with one vertex given by $p$ and an one edge given by a segment of the bottom edge in Figure \ref{fig::phasediagram}) belongs to $\mathcal G$ as well.  In particular, this means that the interior of $\mathcal G$ can be written as a union of interiors of equilateral triangles of this type (the triangles under $p$, as $p$ ranges over all of $\mathcal G$).  The interior of each such triangle is described by the set of points below a Lipschitz function (ignoring the points on the bottom edge itself), and the union is the set of points below the supremum of these functions.  The lemma then follows from the fact that the supremum of a family of Lipschitz functions (with given Lipschitz norm) is itself a Lipschitz function.  (Note that this argument does not tell us whether the points on the boundary curve are themselves members of $\mathcal G$.)
%Let us recall a classical way to couple $\Pr_p$ and $\Pr_q$, for $p \not = q$, using uniform random variables.
%Let $(U_x)_{x \in \L}$ be i.i.d.\ uniform-$[0,1]$ random variables.
%For any probability vector $p$ we may define the coloring
%$\sigma_{p} :\L \to \set{1,2,3}$ by setting
%$\sigma_{p}(x) = j$ for the unique $j \in \set{1,2,3}$ such that
%$\sum_{k<j} p_k < U_x \leq \sum_{k \leq j} p_k$.
%So $\sigma_p$ has law $\Pr_p$.
%Now, if $q,p \in \mathcal{T}$ are such that $q_1 = p_1 , q_2 \geq p_2$
%then the above coupling shows that any face colored orange by $\sigma_p$
%is still colored with orange by $\sigma_q$.
%Thus, we have that if $p \in \mathcal O$ then $q \in \mathcal O$.
%Reversing the roles of $1,2$ in the coupling, we also obtain that
%if $p \in \mathcal O$ and $q \in \mathcal{T}$ satisfies $q_1 \geq p_1 , q_2 = p_2$ %then
%$q \in \mathcal O$ as well.
%In conclusion:
%if $p \in \mathcal O$ then the whole triangle
%$\set{ q \in \mathcal{T} \ : \ q_3 \leq p_3 }$ is contained in $\mathcal O$.
\end{proof}

\begin{thm}
\label{thm:pc and BC}
If $p \in \mathcal G$ then $p_3 \geq p_c \cdot \sr{ 1 + \sr{ \tfrac{p_1}{14} }^{14} }$,
where $p_c = p_c(\L)$ is the critical value for site percolation on $\L$.
\end{thm}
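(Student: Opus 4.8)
The plan is to control the green face cluster through a fixed face by an ordinary single-color (blue) \emph{cell} cluster, invoke the standard exponential decay of subcritical clusters quoted in the introduction, and then squeeze a strictly positive gap over $p_c$ out of the red cells. Throughout I identify the colors with $1,2,3 = $ red, yellow, blue as in the statement, so that a green face is a yellow--blue face.

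\emph{Step 1: a green interface forces a connected blue cluster.} Let $C$ be the green cluster through a fixed green face $f_0$, with $|C| = N$ green faces, and let $Z$ be the set of blue cells incident to some face of $C$. The first task is to show $Z$ is connected in the adjacency graph of $\L$. This is a purely local analysis at each edge where two faces of $C$ meet: such an edge is a $3$-clique $\{a,b,c\}$ of $\L$, two of whose three faces are green, and a short case check shows the only color patterns with two yellow--blue faces are $(\mathrm{blue},\mathrm{yellow},\mathrm{yellow})$ and $(\mathrm{blue},\mathrm{blue},\mathrm{yellow})$; in the first the two green faces share their unique blue cell, while in the second the two incident blue cells lie in a common clique and are therefore adjacent. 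Hence along any chain of edge-adjacent green faces the incident blue cells are successively equal or adjacent, so $Z$ is connected. Since a cell of $\L$ has only $14$ faces (the truncated octahedron has $6$ square and $8$ hexagonal faces), each blue cell carries at most $14$ green faces, whence $|Z| \geq N/14$. As $Z$ is a genuine blue cluster containing one of the two (fixed) cells incident to $f_0$, we get $\Pr_p[\,f_0 \text{ green},\ |C| \geq N\,] \leq \Pr_p[\,\text{the blue cluster of one of those two cells has size} \geq N/14\,]$, which decays exponentially as soon as $p_3 < p_c$. This already yields the non-strict inclusion $\mathcal G \subseteq \{p_3 \geq p_c\}$ (and, by the symmetric yellow argument, $\mathcal G \subseteq \{p_2 \geq p_c\}$).

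\emph{Step 2: the strict gap.} To upgrade $p_3 \geq p_c$ to $p_3 \geq p_c\bigl(1 + (p_1/14)^{14}\bigr)$ I would exhibit $Z$ as a cluster of a strictly \emph{thinned} blue percolation. The mechanism is a de-enhancement keyed to red cells: at a positive fraction of cells of $Z$ one can find a purely local red configuration — prescribing the colors of the $14$ neighbors of the cell so as to reroute the yellow--blue interface around it — which certifies that the cell is redundant for the macroscopic connectivity of the interface, so that treating it as closed still leaves a connected cluster of size at least $N/14 - 1$. Each such certifying event has probability at least $(p_1/14)^{14}$: the power of $p_1$ records that the relevant neighbors must all be red, the exponent $14$ is the coordination number of $\L$, and $14^{-14}$ is a crude choice/union factor (this is exactly why the constant is explicit but non-optimal). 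Realizing these deletions by an independent thinning shows that a green interface of size $N$ forces a connected cluster of comparable size in a blue percolation of effective density at most $p_3\bigl(1 - (p_1/14)^{14}\bigr)$. Since $p \in \mathcal G$ produces such clusters at arbitrarily large sizes, the thinned percolation cannot be subcritical, so $p_3\bigl(1 - (p_1/14)^{14}\bigr) \geq p_c$, which rearranges to the claimed bound. Note that as $p_1 \to 0$ the thinning disappears and one recovers the sharp boundary value $p_3 = p_c$, consistent with the $p_1 = 0$ case where green is exactly the blue--yellow interface of a two-coloring.

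I expect Step 2 to be the main obstacle; Step 1 is essentially bookkeeping once the clique case-check is in hand. The two delicate points are (i) pinning down a genuine local red pattern that certifies a blue interface cell as deletable \emph{without} disconnecting the interface, and (ii) passing from these geometry-dependent, correlated deletion events to an honest \emph{independent} thinning (or a stochastic domination by one). Both are finite-range in nature — the certifying event at a cell depends only on its $14$ neighbors, and the independence-killing interactions of Step~1 are likewise confined to $L^\infty$-distance $2$ — so I would resolve them by a revealing/coupling argument at the price of the lossy constant $(p_1/14)^{14}$, accepting that the resulting bound is far from sharp.
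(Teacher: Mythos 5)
Your Step 1 is correct and is the same observation the paper uses (implicitly, inside its proof): at any edge where two green faces meet, a clique case-check shows their blue cells coincide or are adjacent, so the blue side of a green interface of size $N$ is a connected blue set of size at least $N/14$, giving the non-strict bound $p_3 \geq p_c$ on $\mathcal G$. The genuine gap is in Step 2, which is the entire content of the theorem. You propose an Aizenman--Grimmett-style de-enhancement: certify a positive density of interface blue cells as ``deletable'' by a local red pattern, then dominate by an independent thinning of density $p_3\bigl(1-(p_1/14)^{14}\bigr)$. Neither half of this is carried out, and both halves are exactly where the difficulty lies. First, no concrete certificate is exhibited, and the one you gesture at is in tension with itself: red neighbors cannot carry green faces, so surrounding a blue interface cell with red cells does not ``reroute'' the yellow--blue interface around it --- it erases the green faces there, and redundancy of the cell for the \emph{cluster's} connectivity is a nonlocal property that a $14$-neighbor pattern does not obviously control. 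Second, and more fundamentally, even granting that each certified cell is individually redundant, deleting \emph{all} certified cells simultaneously can disconnect the cluster (two adjacent cells can each be redundant while their joint removal is not), and passing from correlated, configuration-dependent deletions to a genuine i.i.d.\ thinning that still contains a large connected cluster is not a routine ``revealing/coupling argument''; it is the kind of step that in the enhancement literature requires Russo-formula differential machinery, which your sketch does not supply.

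The paper avoids deletion entirely by a construction that makes the strict gap exact rather than perturbative. It generates the coloring from auxiliary i.i.d.\ Bernoulli-$\eps$ marks $B(x,y)$ on \emph{directed} edges and i.i.d.\ Bernoulli-$\beta$ variables $C_x$: a cell is blue if $C_x=1$, yellow if $C_x=0$ and all its \emph{outgoing} edges are unmarked, red otherwise; this realizes $\Pr_p$ with $p=\bigl((1-\beta)(1-(1-\eps)^{14}),\,(1-\beta)(1-\eps)^{14},\,\beta\bigr)$. The point is that if a blue cell $x$ touches a yellow cell $y$, then the edge $(y,x)$ is unmarked (yellow means all outgoing marks vanish), so every blue cell on a green interface automatically lies in
$$ G = \set{ x \ : \ C_x = 1 , \ \exists \, y \sim x \ : \ B(y,x)=0 } , $$
and since distinct cells use disjoint sets of variables ($C_x$ and the \emph{incoming} edges of $x$), the events $\set{x \in G}$ are exactly independent: $G$ is honest site percolation of density $\beta(1-\eps^{14}) < \beta$. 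Large green clusters then force large $G$-clusters (your Step 1), so $\beta(1-\eps^{14}) \geq p_c$; combined with the monotonicity of $\mathcal G$ in $(p_2,p_3)$ from the Lipschitz proposition and $1-(1-\eps)^{14}\le 14\eps$, taking $\eps = p_1/14$, $\beta = p_3$ yields the stated bound. In short: the missing idea is a coupling that builds the sub-density site percolation \emph{into} the law of the coloring, rather than trying to extract it afterwards by thinning the realized interface.
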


\begin{proof}
Fix $\eps>0$ small, fix some $\beta \in [0,1]$
and consider the following coloring.
The faces in $\L$ correspond to edges $x \sim y$ for $x,y \in \L$.
Let us consider directed edges (or faces); so set $E = \set{ (x,y) \ : \ x \sim y \in \L}$.
Let $(B(x,y))_{(x,y) \in E}$ be i.i.d.\ Bernoulli-$\eps$ random variables,
and let $(C_x)_{x \in \L}$ be i.i.d.\ Bernoulli-$\beta$ random variables, so that
all random variables are independent.
For every $x$ let $B_x = \sum_{y \sim x} B(x,y)$.
Note that $0 \leq B_x \leq 14$ because $14$ is the degree in $\L$.

For every cell $x \in \L$ set
$$ \sigma(x) =
\begin{cases}
1 & \textrm{ if } C_x = 0 , B_x > 0 \\
2 & \textrm{ if } C_x = 0 , B_x = 0 \\
3 & \textrm{ if } C_x = 1 .
\end{cases}
$$

Note that $\sigma$ has the law $\Pr_{p}$
for $p = ( (1-\beta) (1- (1-\eps)^{14} ) , (1-\beta) (1-\eps)^{14} , \beta )$.

Let $G = \set{ x \ : \ \sigma(x) = 3 , \exists \ y \sim x \ : \ B(y,x) = 0 }$;
that is, $G$ is the set of all cells colored $3$ (blue) with at least one incoming
face that has $B(y,x) = 0$.
Note that if $x \sim y$ such that $\sigma(x) = 3, \sigma(y) = 2$ then
$B(y,x) = 0$ so $x \in G$.
So any component of faces colored $2,3$ (green) must be on the boundary
of a component of cells in $G$.
That is, if there exists a green bi-colored component of size $N$ passing near the origin, then $G$ contains a component whose size is also of order $N$ (up to a constant factor).

Now, note that $x \in G$ if and only if $C_x = 1$ and $\sum_{y \sim x} B(y,x) < 14$.
Since all these events are independent, we have that $G$ is just site percolation
on $\L$ with parameter $\beta (1-\eps^{14})$.
Thus, if $\beta (1-\eps^{14}) < p_c(\L)$ then the size of the component of $G$ incident to a given face has exponentially decaying law, and so $p \not\in \mathcal G$.

Thus, using $(1- (1-\eps)^{14}) \leq 14 \eps$, if
$(14 \eps,1-\beta-14\eps, \beta) \in \mathcal G$ then also
$((1-\beta) (1-(1-\eps)^{14}) , (1-\beta)(1-\eps)^{14} , \beta ) \in \mathcal G$.
So $\beta \geq p_c (1 - \eps^{14} )^{-1} \geq
p_c \cdot (1+ \eps^{14})$.
\end{proof}

\begin{proof}[Proof of Theorem \ref{thm::orangegreenpurple}]
Theorem \ref{thm::orangegreenpurple} now follows
from Proposition \ref{prop:Lipschitz} and Theorem \ref{thm:pc and BC}.
%(All colors play the same role by symmetry.)
\end{proof}

There are some simple but interesting consequences of the above results.

\begin{cor}
If $p=(p_1,p_2,p_3)$ is in the extended phase then $p_j \geq
p_c (1+ \sr{ \tfrac{p_c}{14} }^{14} )$ for all $j =1,2,3$, where $p_c = p_c(\L)$.
In particular, Corollary \ref{cor:extended separated} holds.
\end{cor}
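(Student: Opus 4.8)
The plan is to deduce the corollary from Theorem~\ref{thm:pc and BC} by combining it with two containments: that the extended phase lies inside the triangle $\{p \in \mathcal T : p_1,p_2,p_3 \geq p_c\}$, and that it lies inside $\mathcal G \cap \mathcal O \cap \mathcal P$, where $\mathcal O$ and $\mathcal P$ denote the orange (red--yellow) and purple (blue--red) analogs of the set $\mathcal G$.

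First I would record the supercriticality containment. If some $p_i < p_c$ then color $i$ is subcritical, and, as observed in the introduction, the tricolor path through any fixed edge is confined to the edge-boundary of a subcritical monochromatic cluster, whose size has exponentially decaying law; hence $\Pr_p(E_n)\to 0$ and $p$ lies in the compact phase. Thus the extended phase is contained in $\{p_1,p_2,p_3 \geq p_c\}$. Next I would argue that the extended phase is contained in $\mathcal G \cap \mathcal O \cap \mathcal P$: a tricolor path crossing the annulus $A_n$ carries, along its three flanking families of cells, a green, an orange, and a purple face cluster of comparable diameter, so if the green cluster through a fixed face had exponentially decaying size then a union bound over the $O(n^3)$ faces meeting $A_n$ would force $\Pr_p(E_n)\to 0$, contradicting membership in the extended phase. (This is the same union-bound device used at the end of the proof of Theorem~\ref{thm::closedandnonempty}.) The analogous statements for orange and purple give the full containment in $\mathcal G \cap \mathcal O \cap \mathcal P$.

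Given these containments, the arithmetic is routine. For $p$ in the extended phase, membership in $\mathcal G$ and Theorem~\ref{thm:pc and BC} give $p_3 \geq p_c\bigl(1 + (p_1/14)^{14}\bigr)$; since $\mathcal G$ is invariant under swapping the yellow and blue probabilities, the same bound with $p_2$ in place of $p_3$ also holds. Applying the color-permuted version of Theorem~\ref{thm:pc and BC} to $\mathcal O$ (orange faces, with blue the excluded color) gives $p_1 \geq p_c\bigl(1 + (p_3/14)^{14}\bigr)$. In each of these inequalities the coordinate appearing on the right is at least $p_c$ by the first containment, and since $x \mapsto p_c\bigl(1 + (x/14)^{14}\bigr)$ is increasing we may replace it by $p_c$ to obtain $p_j \geq p_c\bigl(1 + (p_c/14)^{14}\bigr)$ for each $j$. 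As $p_c\bigl(1+(p_c/14)^{14}\bigr) > p_c$ strictly, this separates the extended phase from each line $p_j = p_c$ by a fixed positive gap, which is precisely Corollary~\ref{cor:extended separated}.

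The only step with genuine content is the second containment, namely the implication ``$p$ in the extended phase $\Rightarrow p \in \mathcal G$''. The delicate point is to match the oriented, path-like tricolor object against an ordinary (unoriented) green face cluster and to check that their diameters, and hence sizes, are comparable up to a universal constant; once that correspondence is set up, the union bound over the faces meeting $A_n$ closes the argument exactly as in Theorem~\ref{thm::closedandnonempty}. Everything after that is bookkeeping with Theorem~\ref{thm:pc and BC} and its permutations.
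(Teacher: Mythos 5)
Your proposal is correct and follows essentially the same route as the paper, which states this corollary without a written proof precisely because it is the combination you spell out: the containment of the extended phase in the region where green/orange/purple face clusters are large (asserted in Section \ref{subsec::newresults} via the observation that a long tricolor path forces comparably large bicolor face clusters), the containment in the triangle $\{p_1,p_2,p_3\ge p_c\}$ (from the subcritical-percolation remark in Section \ref{subsec::overview}), and Theorem \ref{thm:pc and BC} applied with the colors permuted, using monotonicity of $x \mapsto p_c\bigl(1+(x/14)^{14}\bigr)$. Your filling-in of the union-bound argument for ``extended $\Rightarrow p \in \mathcal G$'' and of the color-symmetry bookkeeping is exactly the intended (implicit) reasoning.
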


\begin{cor} \label{cor::explicitcriticalpercolationbound}
If $p_c(\L)$ is the critical threshold for site percolation on $\L$, then $p_c(\L) < \tfrac13$.
\end{cor}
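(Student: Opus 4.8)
The plan is to deduce $p_c(\L) < 1/3$ from two facts already established above: that the extended phase is non-empty (Theorem \ref{thm::closedandnonempty}), and that every point of the extended phase has all three coordinates bounded below by $p_c \sr{ 1 + \sr{ \tfrac{p_c}{14} }^{14} }$ (the corollary immediately preceding, itself a consequence of Theorem \ref{thm:pc and BC}). Given these inputs the corollary is a one-line arithmetic deduction.

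First I would invoke Theorem \ref{thm::closedandnonempty} to fix a point $p = (p_1, p_2, p_3)$ lying in the extended phase. By the preceding corollary, each coordinate satisfies $p_j \geq p_c \sr{ 1 + \sr{ \tfrac{p_c}{14} }^{14} }$ for $j = 1,2,3$. Summing these three inequalities and using $p_1 + p_2 + p_3 = 1$ yields
$$ 1 = p_1 + p_2 + p_3 \geq 3 p_c \sr{ 1 + \sr{ \tfrac{p_c}{14} }^{14} }, $$
so that $p_c \sr{ 1 + \sr{ \tfrac{p_c}{14} }^{14} } \leq \tfrac13$.

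To upgrade this to a strict inequality I would note that $p_c = p_c(\L) > 0$: since $\L$ has bounded degree $14$, a standard path-counting (Peierls) argument gives $p_c \geq \tfrac{1}{13} > 0$. Consequently the factor $\sr{ \tfrac{p_c}{14} }^{14}$ is strictly positive, and therefore
$$ p_c < p_c \sr{ 1 + \sr{ \tfrac{p_c}{14} }^{14} } \leq \tfrac13, $$
which is exactly the assertion $p_c(\L) < 1/3$.

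The statement thus costs essentially nothing once the ingredients are assembled; its real content lies in its two inputs. The genuinely hard step, and the main obstacle, is the non-emptiness of the extended phase furnished by Theorem \ref{thm::closedandnonempty}, whose proof (via the varying-probability prism and the flow computation of Proposition \ref{prop::flowthroughsurface}) is the most delicate argument in the paper; by contrast the coordinate lower bound coming from Theorem \ref{thm:pc and BC} is elementary, and the closing arithmetic is immediate. One point worth flagging is that the argument is purely existential: it guarantees that \emph{some} $p$ in the extended phase has all coordinates at least $p_c$, and hence $p_c < 1/3$, without exhibiting any explicit such $p$.
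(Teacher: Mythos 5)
Your proposal is correct and follows essentially the same route as the paper: pick a point in the non-empty extended phase (Theorem \ref{thm::closedandnonempty}), apply the coordinate lower bound $p_j \geq p_c \bigl( 1 + ( p_c/14 )^{14} \bigr)$ from the preceding corollary, and conclude by elementary arithmetic, with strictness coming from $p_c > 0$. The only cosmetic difference is that you sum the three coordinate inequalities, whereas the paper applies the pigeonhole observation that some $p_j \leq 1/3$; these are interchangeable one-line deductions, and your explicit justification that $p_c > 0$ (via a Peierls bound) is a small point the paper leaves implicit.
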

\begin{proof}
The extended phase is non-empty, so let $p \in \mathcal{T}$ be a probability vector in the extended phase.  There exists $j$ such that $p_j \leq \tfrac13$.
So $p_c < p_c (1+ (p_c/14)^{14} ) \leq p_j \leq \tfrac13$.  The statement that $p_c (1+(p_c/14)^{14}) \leq 1/3$ implies that $p_c \leq x$ where $x$ is the positive solution to
$x(1+(x/14)^{14}) = 1/3$.  This bound is explicit, but the $x$ described this way is only barely below $1/3$.  Numerically we find
$x \approx .333333333333333333333327$.
\end{proof}

%\begin{rem}
%The event that there exist long (resp.\ infinite)  tricolored paths implies that
%there must exist long (resp.\ infinite) paths of each color: red, yellow, blue.
%{\em A-priori} one may ask whether this is also an equivalence of events, at least
%up to $0$-measure.  That is, one may ask whether it is true that if there exist
%three infinite components in $\L$, one of each color, then they a.s.\ must
%form a tricolored ``interface'' at some infinite tricolored path. The proof of Corollary \ref{cor::explicitcriticalpercolationbound} shows that this is not the case.
%There are probability vectors $p \in \mathcal T$ such that $\Pr_p$-a.s.\ there are
%three infinite components, one of each color, and $p$ is in the compact phase, so that the law
%of length of the tricolor path passing through a fixed vertex has an exponentially %decaying tail.
%\end{rem}

%%%%%%%%%%%%%%%%%%%%%%%

%\begin{figure}[htbp]
%\begin{center}
%\includegraphics[width=3in]{bicolortriangle.png}
%\caption{Bi-color percolation on a triple of layers}
%\end{center}
%\end{figure}

\appendix

\section{Permutohedral Lattice in dimension $d$} \label{sec::highdpermutohedra}

In this section we present a short practical overview the classical tessellation of finite dimensional space by permutohedra --- sufficient to allow the reader to perform computer simulations of the type described in Section \ref{subsec::overview}.  A more thorough treatment may be found in e.g.\ \cite{sloane1999sphere, ziegler1995lecture}.    The {\bf permutohedron of order $d$} can be defined as the convex hull of the points in $\R^d$ defined by $(\sigma(1), \sigma(2), \ldots, \sigma(d))$, where $\sigma$ ranges over the $d!$ possible permutations of $\{1,2,\ldots, d \}$.
These vertices all belong to the hyperplane $\{v : v_1+v_2 \ldots + v_d = d(d+1)/2 \}$.  In fact, it is often convenient to center this construction at the origin, so we define the centered permutohedron $\perm_d$ to be the convex hull of the vectors $$\Biggl( \sigma(1) - \frac{d+1}{2}, \ldots, \sigma(d) - \frac{d+1}{2} \Biggr),$$
as in Figure \ref{fig::convexhulls}.  Then $\perm_d$ lies in the subspace of $\R^d$ orthogonal to the vector $(1,1,\ldots,1)$.  We denote this vector by $\mbf{1}$, and the orthogonal subspace by $\mbf{1}^{\perp}$.

\begin{figure}[htbp]
\begin{center}
\includegraphics[width=0.4\textwidth]{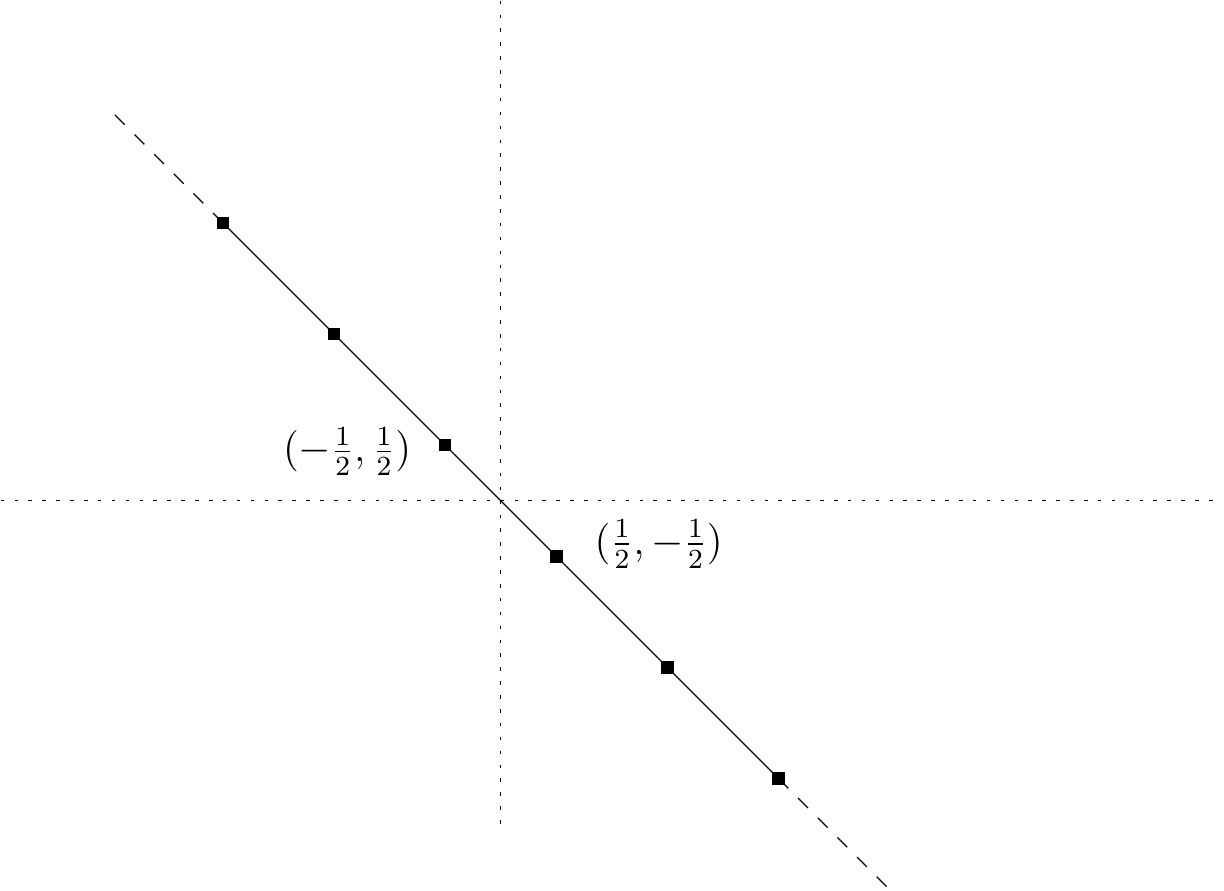} \hspace{.1in} \includegraphics[width=0.3\textwidth]{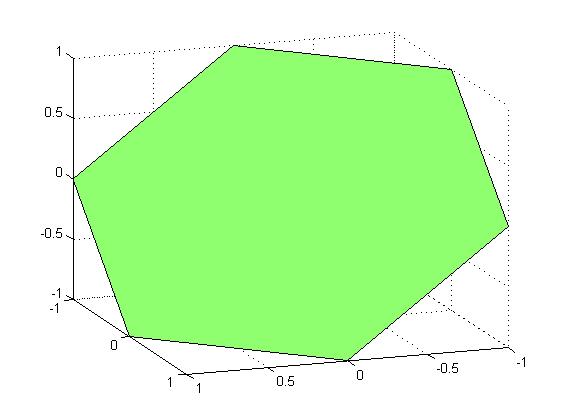}
\end{center}
\caption{ \label{fig::convexhulls} The convex hull of the set of vectors obtained by permuting the coordinates of $(-1/2,1/2)$ is a line segment embedded in $\R^2$.  Translates of this line segment tile the line of points orthogonal to $(1,1)$ in the obvious way.  The convex hull of the set of vectors obtained by permuting the coordinates of $(-1,0,1)$ is a hexagon embedded in $\R^3$.  Translates of this hexagon tile the plane orthogonal to $(1,1,1)$.}
\end{figure}

%Let $\mbf{c} = \frac{d+1}{2} \cdot \mbf{1}$.
%It is immediate that any permutation $\sigma$ admits $\sigma - \mbf{c} \in %\mbf{1}^{\perp}$.

Let $\L_d$ be the image of the lattice $d \mathbb Z^d$ under the orthogonal projection map sending $\R^d$ to $\mbf{1}^{\perp}$.  In other words, $\L_d$ is the subset of $\mathbb Z^d$ consisting of vertices whose coordinates sum to zero and are all equal to each other modulo $d$.  For example, if $d=5$, then $(2,7,-3,-8,2) \in \L_d$.  The permutohedron tessellation of $\mbf{1}^{\perp} \equiv \R^{d-1}$ is simply the Voronoi tessellation of $\L^d$.

\begin{prop}
\label{prop:perm=voronoi cell}
In the space $\mbf{1}^{\perp}$,
the permutohedron $\perm_d$ is the Voronoi cell (or Dirichlet region)
of $0$ in the lattice $\L_d$.
For any $x \in \L_d$, the Voronoi cell of $x$ is
$\perm_d + x$.
That is,
%$\perm_d + x$ is the set of points  $z \in \mbf{1}^{\perp}$
%such that $||z-x|| \leq ||z-y||$ for all $y \in \L_d$.
$$ \perm_d + x = \set{ z \in \mbf{1}^{\perp} \ : \ \forall \ y \in \L_d \ ||z-x|| \leq ||z-y|| } . $$
Thus, these cells tessellate the space $\mbf{1}^{\perp}$.
\end{prop}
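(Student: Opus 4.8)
The plan is to realize the permutohedron $\perm_d$ as an explicit intersection of half-spaces, to identify those half-spaces with the perpendicular bisectors attached to a distinguished family of lattice vectors, and then to upgrade the resulting one-sided inclusion to an equality by checking vertices. First I would reduce to the case $x=0$: since $\L_d$ is invariant under translation by any of its elements, the Voronoi cell of $x$ is the translate by $x$ of the Voronoi cell $V_0$ of $0$, so it suffices to prove $\perm_d = V_0$, where
$$ V_0 = \Set{ z \in \mbf 1^\perp }{ \forall \, v \in \L_d \setminus \set 0, \ \IP{z,v} \le \tfrac12 \norm{v}^2 } , $$
the inequality being exactly the bisector condition $\norm{z} \le \norm{z-v}$.

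Next I would write down the facet description of $\perm_d$. A vertex of $\perm_d$ is a permutation of $w := \sr{1 - \tfrac{d+1}2, \ldots, d - \tfrac{d+1}2}$, and for a nonempty proper $S$ of size $k := |S|$ the functional $z \mapsto \sum_{i \in S} z_i$ is maximized over these vertices by placing the $k$ largest entries on $S$, giving
$$ \perm_d = \Set{ z \in \mbf 1^\perp }{ \textstyle\sum_{i\in S} z_i \le \tfrac{ |S|\,(d-|S|) }{2} \ \textrm{ for all } \emptyset \neq S \subsetneq \set{1,\ldots,d} } . $$
To each such $S$ I associate the vector $v_S \in \L_d$ with entries $d-k$ on $S$ and $-k$ off $S$ (its coordinate sum is $0$ and all its coordinates are $\equiv -k \pmod d$, so $v_S \in \L_d$). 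Using $\sum_{i\notin S} z_i = -\sum_{i\in S} z_i$ for $z \in \mbf 1^\perp$, one computes $\IP{z,v_S} = d\sum_{i\in S} z_i$ and $\norm{v_S}^2 = d\,k(d-k)$, so the bisector half-space $\IP{z,v_S}\le\tfrac12\norm{v_S}^2$ is precisely the facet inequality for $S$. Since each $v_S$ lies in $\L_d\setminus\set 0$, the intersection defining $V_0$ runs over a larger family and hence $V_0 \subseteq \perm_d$.

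The reverse inclusion $\perm_d \subseteq V_0$ is the main obstacle. Because $V_0$ is convex and $\perm_d$ is the convex hull of the permutations of $w$, it suffices to show that every vertex lies in $V_0$; and because $\L_d$ and the vertex set are both invariant under permuting coordinates, it is enough to treat the single vertex $w$. Thus the crux is to prove that $0$ is a nearest point of $\L_d$ to $w$, that is $\IP{w,v}\le\tfrac12\norm{v}^2$ for all $v\in\L_d$. Since $\IP{w,v} = \sum_i i\,v_i$ and the rearrangement inequality lets me assume $v$ is sorted increasingly (its increasing rearrangement is again in $\L_d$ and has the same norm but a larger inner product with $w$), this becomes $\sum_i i\,v_i \le \tfrac12\sum_i v_i^2$ for sorted $v\in\L_d$. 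I would prove it by decomposing $\L_d$ into the residue classes $v_i\equiv c \pmod d$: for $c=0$ every coordinate of $w$ has absolute value $< d/2$, so the nearest class-$0$ vector is $v=0$ itself, at distance $\norm{w}$; for $c\neq 0$ a short estimate exploiting that consecutive sorted coordinates of $v$ differ by nonnegative multiples of $d$ shows the minimal distance in that class is no smaller than $\norm{w}$, with equality precisely for the vectors $v_S$.

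I want to stress that this final step, rather than the half-space bookkeeping, is where the content lies: the inequality genuinely requires the integrality-plus-congruence structure of $\L_d$, since the continuous relaxation (dropping the lattice condition on $v$) is false. As an alternative ending one could instead invoke that Voronoi cells tile with $\mathrm{vol}(V_0) = \mathrm{covol}(\L_d)$ and match this against the classical value of $\mathrm{vol}(\perm_d)$; together with $V_0\subseteq\perm_d$ this forces equality, trading the congruence estimate for two volume computations. Once $\perm_d = V_0$ is established, the translation reduction from the first step yields $\perm_d + x$ as the Voronoi cell of $x$ for every $x\in\L_d$, and the tessellation statement is the standard fact that Voronoi cells of a lattice tile the ambient space.
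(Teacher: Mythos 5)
Your proposal addresses a statement that the paper itself never proves: Proposition \ref{prop:perm=voronoi cell} is presented in the appendix as classical background, with the reader referred to \cite{sloane1999sphere, ziegler1995lecture}, so there is no internal argument to compare against. Judged on its own terms, your plan is sound and its architecture is a legitimate way to prove the proposition: reduce to $x=0$ by translation invariance; observe that the bisector half-space of each $v_S$ is exactly the facet inequality $\sum_{i \in S} z_i \leq \tfrac{|S|(d-|S|)}{2}$; deduce $V_0 \subseteq \perm_d$ from the facet description; and prove $\perm_d \subseteq V_0$ by showing the vertex $w$ has $0$ as a nearest point of $\L_d$, using convexity and the symmetry of $\L_d$ under coordinate permutations.

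Two steps deserve flags. First, the computation you give for the facet description (maximizing $\sum_{i\in S} z_i$ over the vertices) only proves the inclusion of $\perm_d$ in the intersection of those half-spaces, while the inclusion your argument actually consumes is the converse one: that the inequalities indexed by nonempty proper subsets $S$ are a \emph{complete} description of $\perm_d$. That is Rado's theorem on majorization (equivalently, the facet description in \cite{ziegler1995lecture}); it is citable but not trivial, and it is carrying real weight. Note that your alternative ``volume'' ending needs it too, since it also starts from $V_0 \subseteq \perm_d$; if you want to avoid Rado entirely, pair the volume identity $\mathrm{vol}(V_0) = \mathrm{covol}(\L_d) = d^{\,d-3/2} = \mathrm{vol}(\perm_d)$ with the inclusion you \emph{do} prove, $\perm_d \subseteq V_0$, since a closed convex set containing a convex body of the same finite volume must equal it. Second, the ``short estimate'' for residue classes $c \neq 0$ is only sketched, but the claim is true and can be completed more simply than your sorted-coordinates plan suggests, with no rearrangement step at all: for any $v$ whose coordinates are all $\equiv c \pmod d$, bound coordinate-wise $\norm{w-v}^2 \geq \sum_i \dist(w_i, c + d\Z)^2$. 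Since $w_1, \ldots, w_d$ are $d$ consecutive unit-spaced values centered at $0$, the numbers $w_i - c$ represent every residue class modulo $d$ exactly once, so their centered representatives (the per-coordinate residuals) form precisely a permutation of $\set{w_1, \ldots, w_d}$; hence the lower bound equals $\norm{w}^2$. Moreover the coordinate-wise nearest points then sum to $0$ and all lie in the class $c$, so they assemble into a vector of $\L_d$ --- your $v_S$ with $|S| = d - c$, or $0$ when $c = 0$ --- showing the bound is attained exactly there. This single computation handles both $c = 0$ and $c \neq 0$, proves $\IP{w,v} \leq \tfrac12 \norm{v}^2$ for all $v \in \L_d$, and identifies the equality cases (the $d$ lattice points nearest to the vertex $w$), which is consistent with $d$ cells of the tessellation meeting at each vertex of $\perm_d$.
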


%Difference is described by the set of coordinates on which it is positive.
%In a clique of $V_n$ with $n$ elements, we have one of each class modulo $n$.  Cycling through classes, the ``big'' change happens on a distinct coordinate each time. Clique determined by %vertex $v \in V_n$ and permutation.  Average of points in the clique: $v_i + (\sigma(i) � \frac{n+1}{2})$ where the $\sigma: \{1,2,\ldots,n \} \to \{1,2,\ldots, n \}$ is a permutation.
% Gives the permutohedron representation of the cells.   We can take $n=4$ version to our previous interpretation by mapping $(2,2,-2,-2)$ and $(2,-2,-2,2)$ and $(-2,-2,2,2)$ to the three %coordinate directions.

For a non-trivial subset $\emptyset \neq F \subsetneq \set{1,\ldots, d}$
let $v_F$ be the vector
$$ v_F (j) =
\begin{cases}
|F| - d & j \in F \\
|F| & j \not\in F .
\end{cases} $$
Note that, viewing $v_F$ as an ordered $d$-tuple, we have $v_F \in \mbf{1}^{\perp}$ and all entries of $v_F$
are in the same class modulo $d$.  In fact, the $v_F$ are precisely the non-zero elements of $\L_d$ whose coordinates all have absolute value less than $d$.
%Write $v_j = v_{\set{j}}$.
One may then observe that $\L_d$ is the lattice generated by
$\set{ v_F \ : \ \emptyset \neq F \subsetneq \set{1,\ldots, d} }$.  It turns out that $v,w \in \L_d$ correspond to adjacent cells in the permutohedron tessellation if and only if
$(v-w)$ belongs to this set.  Equivalently, $v$ and $w$
are adjacent if all coordinates of $v-w$ have absolute value less than $d$.

By construction, the vertices of the permutohedron $\perm_d$ are in one-to-one correspondence
with permutations.  Moreover, it is not hard to see that the vertices described by permutations $\sigma$ and $\tau$ lie on a common edge of $\perm_d$ if and only if
$\sigma = (j \ j+1) \tau$
for some $j \in \set{1,\ldots, d-1}$,
where $(j \ j+1)$ is the transposition of $j$ and $j+1$.
The graph with these edges is the Cayley graph of $S_d$ with respect to
the generating set of all transpositions of the form $(j \ j+1)$ where $
j \in \{ 1,\ldots, d-1 \}$.

%Thus, together with the graph structure, these give a dual graph to $\L_d$.
Define $\L_d^*$ as the graph whose vertices are the translates of $S_d-\mbf{c}$
by elements of $\L_d$, and for two translates $x^* = x+\sigma - \mbf{c}$
and $y^* = y + \tau - \mbf{c}$, declare $x^* \sim y^*$ if
$x=y$ and $\sigma \sim \tau$ in $S_d$.  Every vertex of $\L_d^*$ is at the intersection of
a clique of $d$ cells in $\L_d$.  Moreover,
any edge in $\L_d^*$ is the intersection of a clique of $d-1$ cells
of $\L_d$.
Thus, there is a bijection between $d$-cliques in $\L_d$ and vertices
of $\L_d^*$, and a bijection between $(d-1)$-cliques of $\L_d$
and  edges of $\L_d^*$.

\bibliographystyle{plain}
%\addcontentsline{toc}{section}{Bibliography}
\bibliography{tricolor}

\end{document}